\theoremstyle{plain}
\newtheorem{thm}{\protect\theoremname}[section]
\theoremstyle{plain}
\newtheorem{lem}[thm]{\protect\lemmaname}
\theoremstyle{remark}
\newtheorem{rem}[thm]{\protect\remarkname}
\theoremstyle{plain}
\newtheorem{prop}[thm]{\protect\propositionname}
\theoremstyle{plain}
\newtheorem{cor}[thm]{\protect\corollaryname}
\theoremstyle{definition}
\newtheorem{example}[thm]{\protect\examplename}
\newcommand{\R}{\mathbb{R}}
\newcommand{\N}{\mathbb{N}}
\newcommand{\dom}{\operatorname{dom}}
\newcommand{\ran}{\operatorname{ran}}
\renewcommand{\Re}{\operatorname{Re}}
\renewcommand{\tilde}{\widetilde}
\newcommand{\diag}{\operatorname{diag}}
\theoremstyle{definition}
\providecommand{\corollaryname}{Corollary}
\providecommand{\examplename}{Example}
\providecommand{\lemmaname}{Lemma}
\providecommand{\propositionname}{Proposition}
\providecommand{\remarkname}{Remark}
\providecommand{\theoremname}{Theorem}
\begin{document}
\title{Adjoints of sums of m-accretive operators and applications to non-autonomous
evolutionary equations}
\author{Rainer Picard\thanks{Institut für Analysis, TU Dresden, Germany},
Sascha Trostorff\thanks{Mathematisches Seminar, CAU Kiel, Germany},
and Marcus Waurick\thanks{Institut für Angewandte Analysis, TU Bergakademie Freiberg, Germany}}
\maketitle
\begin{abstract}
We provide certain compatibility conditions for m-accretive operators
such that the adjoint of the sum is given by the closure of the sum
of the respective adjoint. We revisit the proof of well-posedness
of the abstract class of partial differential-algebraic equations
known as evolutionary equations. We show that the general mechanism
provided here can be applied to establish well-posedness for non-autonomous
evolutionary equations with $L_{\infty}$-coefficients thus not only
generalising known results but opening up new directions other methods
such as evolution families have a hard time to come by.
\end{abstract}
\textbf{Keywords} Evolutionary Equations, Non-autonomous Equations,
m-accretive Operators\\
\label{=00005Cnoindent}\textbf{MSC2020 }35P05 (Primary), 47D99, 35Q61,
35Q59, 35K05, 35L05 (Secondary)

\tableofcontents{}

\section{Introduction}

Evolutionary Equations as introduced in the seminal paper \cite{Picard2009}
provide a Hilbert space perspective towards numerous (both linear
and non-linear) time-dependent phenomena in mathematical physics.
We refer to the monographs \cite{Primer,EvoEq} for a set of examples
as well as further development of the theory. It is instrumental for
the success of the theory of evolutionary equations that many (if
not all) equations from mathematical physics can be written as a time-dependent
partial differential-algebraic equation. Then, establishing the time-derivative
as an m-accerive, normal operator in some weighted Hilbert space and
gathering all the other unbounded operators (i.e., spatial derivative
operators) in an abstract m-accretive operator $A$ defined on some
Hilbert space enconding the spatial variables, one can write evolutionary
equation as an operator equation in the following form

\begin{equation}
\left(\partial_{0}\mathcal{M}_{0}+\mathcal{M}_{1}+A\right)U=F,\label{eq:evo0}
\end{equation}
where $\partial_{0}$ is the time-derivative, $U$ is the unknown,
$F$ models external forces and $\mathcal{M}_{0}$ and $\mathcal{M}_{1}$
are linear operators in the considered space-time Hilbert space, which
is a tensor product Hilbert space putting together temporal and spatial
variables. Any standard solution theory for evolutionary equations
of the form \prettyref{eq:evo0} provides conditions on the so-called
material law oparators, $\mathcal{M}_{0}$ and $\mathcal{M}_{1}$,
so that 
\[
\overline{\left(\partial_{0}\mathcal{M}_{0}+\mathcal{M}_{1}+A\right)}-c
\]
becomes m-accretive in the space-time Hilbert space. Looking into
the different proofs under the various assumptions on the material
law operators, one realises that the principal mechanism of showing
m-accretivity is based on the following general well-known fact. Quickly
recall that an operator $T$ in some Hilbert space is \textbf{accretive}\footnote{We assume every Hilbert space to be real.},
if for all $u\in\dom(T),$
\[
\langle u,Tu\rangle\geq0.
\]
$T$ is \textbf{m-accretive}, if $T$ is accretive and $T+\lambda$
is onto for all $\lambda>0$ (or equivalently for some $\lambda>0$).
\begin{thm}[{see also \cite[Chapter 3, Theorem 1.43]{Hu}}]
Let $H$ be a Hilbert space, $T\colon\dom(T)\subseteq H\to H$ densely
defined and closed linear operator. Then the following conditions
are equivalent:
\begin{enumerate}
\item $T$ is m-accretive;
\item $T$ and $T^{*}$ are accretive.
\end{enumerate}
If $T-c$ is m-accretive for some $c>0$, then $0\in\rho(T)$.
\end{thm}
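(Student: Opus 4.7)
The plan is to establish the two implications separately, and then handle the $\rho$-claim by a direct computation.

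For the implication $(1)\Rightarrow(2)$: The definition of m-accretivity directly says $T$ is accretive. For $T^{*}$, I would first observe that accretivity of $T$ together with surjectivity of $T+\lambda$ yields that $T+\lambda$ has bounded inverse with $\|(T+\lambda)^{-1}\|\leq1/\lambda$ (since $\|(T+\lambda)u\|\cdot\|u\|\geq\langle u,(T+\lambda)u\rangle\geq\lambda\|u\|^{2}$). Because $T$ is densely defined and closed, $(T+\lambda)^{*}=T^{*}+\lambda$, and taking adjoints of the everywhere-defined bounded operator $(T+\lambda)^{-1}$ gives a bounded everywhere-defined inverse of $T^{*}+\lambda$ with the same norm bound. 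Hence for any $v\in\dom(T^{*})$ one has $\lambda\|v\|\leq\|(T^{*}+\lambda)v\|$; squaring and cancelling $\lambda^{2}\|v\|^{2}$ yields
\[
0\leq\|T^{*}v\|^{2}+2\lambda\langle v,T^{*}v\rangle.
\]
Dividing by $2\lambda$ and letting $\lambda\to\infty$ gives $\langle v,T^{*}v\rangle\geq0$, i.e., accretivity of $T^{*}$.

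For $(2)\Rightarrow(1)$: Fix $\lambda>0$. Accretivity of $T$ yields the estimate $\|(T+\lambda)u\|\geq\lambda\|u\|$, so $T+\lambda$ is injective. Since $T$ is closed, $T+\lambda$ is closed, and the coercive estimate turns this into closed range. The orthogonal complement of $\ran(T+\lambda)$ is $\ker((T+\lambda)^{*})=\ker(T^{*}+\lambda)$, which is trivial by accretivity of $T^{*}$ (same argument as above applied to $T^{*}$). Thus $\ran(T+\lambda)$ is both closed and dense, i.e., all of $H$, establishing m-accretivity.

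For the $\rho$-statement: Suppose $T-c$ is m-accretive with $c>0$. Surjectivity of $(T-c)+\lambda$ for $\lambda=c$ gives surjectivity of $T$. Accretivity of $T-c$ gives $\langle u,Tu\rangle\geq c\|u\|^{2}$ for $u\in\dom(T)$, so by Cauchy--Schwarz $\|Tu\|\geq c\|u\|$; hence $T$ is injective with bounded inverse of norm at most $1/c$. Therefore $0\in\rho(T)$.

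The only slightly delicate point is $(1)\Rightarrow(2)$: one must resist the temptation to prove accretivity of $T^{*}$ by a direct manipulation of $\langle v,T^{*}v\rangle$, and instead exploit the resolvent estimate via the identity $((T+\lambda)^{-1})^{*}=(T^{*}+\lambda)^{-1}$, which is the reason the densely-defined/closed hypothesis enters. Everything else is routine functional analysis.
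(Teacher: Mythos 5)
Your proof is correct, and all the steps check out: the resolvent bound $\|(T+\lambda)^{-1}\|\leq1/\lambda$, the passage to the adjoint via $((T+\lambda)^{-1})^{*}(T^{*}+\lambda)\subseteq1$, the squaring-and-$\lambda\to\infty$ trick for accretivity of $T^{*}$, the closed-range-plus-dense-range argument for surjectivity, and the direct verification of $0\in\rho(T)$. There is nothing in the paper to compare against: the theorem is stated with only a citation to Hu--Papageorgiou and no proof is given, so your argument stands on its own; it is the standard one for this equivalence (note that the clean form $2\lambda\langle v,T^{*}v\rangle$ of the cross term uses the paper's convention that all Hilbert spaces are real; in the complex case one would take real parts throughout).
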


In order to apply the last theorem showing m-accretivity of $T\coloneqq\overline{\left(\partial_{0}\mathcal{M}_{0}+\mathcal{M}_{1}+A\right)}-c$,
it is necessary to compute the adjoint of $T$, which, applying standard
results, boils down to computing the adjoint of $\left(\partial_{0}\mathcal{M}_{0}+\mathcal{M}_{1}+A\right)$
in the space-time Hilbert space. Since both $\partial_{0}\mathcal{M}_{0}$
and $A$ are generically speaking unbounded operators, this is a non-trivial
task. We refer to \cite{MHM} and the references therein for a general
account of computing adjoints of sums of unbounded operators (focussing
on situations complemented to the present one here). In any case,
once a formula of the type 
\[
\left(\partial_{0}\mathcal{M}_{0}+\mathcal{M}_{1}+A\right)^{*}=\overline{\left(\left(\partial_{0}\mathcal{M}_{0}+\mathcal{M}_{1}\right)^{*}+A^{*}\right)}
\]
is established, the accretivity of $T$ (\emph{and }of $T^{*}$) follow
from m-accretivity of $\partial_{0}\mathcal{M}_{0}+\mathcal{M}_{1}$.
Thus, conditions for \prettyref{eq:evo0} being well-posed need to
address the two facts: computing the adjoint in the way sketched above
needs to be possible and the problem needs to be m-accretive if $A=0$. 

The aim of this article is to understand the situation for the case
$\mathcal{M}_{0}=M_{0}(m_{0})$ and $\mathcal{M}_{1}=M_{1}(m_{0})$
are multiplication operators of multiplying in the time-variable by
$t\mapsto M_{0}(t)$ and $t\mapsto M_{1}(t)$, respectively. It is
known that Lipschitz continuous $M_{0}$ allows for computing the
adjoint as above and suitable positive definiteness conditions for
$M_{0}$ together with its (a.e. existing) derivative $M_{0}'$ and
$M_{1}$ lead to m-accretivity for the case $A=0$, see \cite{Picard2013}
or \cite[Chapter 16]{EvoEq}. 

A particular instance of the perspective of using operator sums to
understand partial differential equations has been provided (at least)
as early as \cite{daPrato1975}. However, the methods fail to apply
in a straightforward manner as the coefficient $M_{0}$ is allowed
to have a non-trivial kernel here (at least in the case of Lipschitz
continuous $M_{0}$). We illustrate our findings in the non Lipschitz
case by means of an example later on; note that this particular instance
was addressed in \cite{budde2023}. Even though we were not able to
fully rectify the arguments mentioned in this reference, their major
application is concerned with continuous-in-time coefficients anyway.
It seems that this condition is crucial for evolution families to
be applicable. Thus, in case of non-Lipschitz continuous $M_{0}$,
we establish well-posedness for a system of equations other methods
(such as evolution families or evolution semigroups, see \cite[Chapter VI, Section 9]{Engel_Nagel}
and \cite{Nickel1997}) are structurally deemed to fail.

The next section is concerned with some functional analytic preliminaries,
which we shall find useful in the subsequent parts. The subsequent
section contains our main result concerning operator sums of m-accretive
operators. The second to last section deals with applications to evolutionary
equations. We summarise our findings and open problems in the conclusion
section.

\section{Preliminaries}

Throughout this section, let $H_{0},H_{1},H_{2}$ be Hilbert spaces. 
\begin{lem}
\label{lem:BTclosed}Let $T\colon\dom(T)\subseteq H_{0}\to H_{1}$
be a closed linear operator and $B\colon H_{1}\to H_{2}$ bounded
and linear. Then, if $B$ is one-to-one and has closed range, $BT$
is closed.
\end{lem}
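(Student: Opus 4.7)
The plan is to verify closedness of $BT$ directly from the sequential definition: take a sequence $(u_n)_n$ in $\dom(BT)=\dom(T)$ with $u_n\to u$ in $H_0$ and $BTu_n\to v$ in $H_2$, and show $u\in\dom(T)$ with $BTu=v$.

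The key observation is that the hypotheses on $B$ yield a bounded left inverse on $\ran(B)$. Indeed, since $\ran(B)$ is closed in $H_2$, it is itself a Hilbert space, and $B\colon H_1\to\ran(B)$ is a bounded bijection. By the bounded inverse theorem, $B^{-1}\colon\ran(B)\to H_1$ is bounded. First I would note that $BTu_n\in\ran(B)$ for every $n$, and since $\ran(B)$ is closed, the limit $v$ lies in $\ran(B)$ as well.

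Next I would apply $B^{-1}$: by continuity, $Tu_n = B^{-1}(BTu_n)\to B^{-1}v$ in $H_1$. Combined with $u_n\to u$ in $H_0$ and the closedness of $T$, this gives $u\in\dom(T)$ and $Tu=B^{-1}v$. Applying $B$ once more yields $BTu=v$, finishing the argument.

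There is no real obstacle beyond identifying the correct functional-analytic fact, namely that injectivity together with closed range supplies a bounded inverse on the range; after that the proof is a two-line diagram chase using the closedness of $T$.
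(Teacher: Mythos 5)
Your proof is correct and follows essentially the same route as the paper: both arguments obtain a bounded inverse of $B$ on its (closed) range — you via the bounded inverse theorem, the paper via the closed graph theorem — and then transfer convergence of $BTu_n$ to convergence of $Tu_n$ and invoke the closedness of $T$. Your explicit remark that the limit $v$ lies in $\ran(B)$ because the range is closed is a small point the paper leaves implicit, but otherwise the two proofs coincide.
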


\begin{proof}
The closed graph theorem yields that the adstriction $\tilde{B}\colon H_{1}\to\ran(B)$
of $B$ is a continuously invertible operator. Next, let $(x_{n})_{n}$
in $\dom(BT)$ such that $x_{n}\to x$ and $BTx_{n}\to y$ in $H_{0}$
and $H_{2}$ as $n\to\infty$ for some $x\in H_{0}$ and $y\in H_{2}$.
Then, by the continuity of $(\tilde{B})^{-1}$, we infer $Tx_{n}=\left(\tilde{B}\right)^{-1}BTx_{n}\to\left(\tilde{B}\right)^{-1}y$
in $H_{1}$ as $n\to\infty$. By the closedness of $T$, we obtain
$x\in\dom(T)\subseteq\dom(BT)$ and $Tx=\left(\tilde{B}\right)^{-1}y.$
Applying $\tilde{B}$ to both sides of the latter equality, we infer
$y=BTx$ as desired.
\end{proof}
\begin{thm}
\label{thm:adjointTB}Let $T\colon\dom(T)\subseteq H_{0}\to H_{1}$
be closed, $B\colon H_{2}\to H_{0}$ be a bounded linear operator.
Then, as an identity of relations, we have
\[
(TB)^{*}=\overline{B^{*}T^{*}}.
\]
Moreover, $TB$ is densely defined if and only if $B^{*}T^{*}$ is
a closable operator. If, $B^{*}$ is one-to-one and has closed range
and $T$ is densely defined, then $B^{*}T^{*}$is closed. In particular,
in this case, we have $TB$ is densely defined and
\[
(TB)^{*}=B^{*}T^{*}.
\]
\end{thm}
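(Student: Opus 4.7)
The plan is to identify operator graphs with closed subspaces of product Hilbert spaces and work entirely at the level of these subspaces. Since the adjoint of a (possibly non-densely-defined) linear operator is always the closed linear relation obtained from the graph via the flip $(y,z)\mapsto(-z,y)$ followed by orthogonal complementation, the whole claim will be recast as a subspace identity.

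First, I would bundle $B$ into the bounded block operator $\tilde{B}\colon H_{2}\oplus H_{1}\to H_{0}\oplus H_{1}$, $(x,y)\mapsto(Bx,y)$, whose adjoint acts by $(u,v)\mapsto(B^{*}u,v)$. A direct check gives that the graph $G(TB)$ equals $\tilde{B}^{-1}(G(T))$, so in particular $G(TB)$ is closed. The standard duality formula for preimages under a bounded operator then yields
\[
G(TB)^{\perp}=\overline{\tilde{B}^{*}\bigl(G(T)^{\perp}\bigr)}.
\]

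Next, I would insert the flip unitaries $J\colon H_{1}\oplus H_{0}\to H_{0}\oplus H_{1}$ and $K\colon H_{1}\oplus H_{2}\to H_{2}\oplus H_{1}$, both given by $(y,z)\mapsto(-z,y)$, which realise the adjoint as a relation: $G(T)^{\perp}=J(G(T^{*}))$ and $G((TB)^{*})=K^{-1}(G(TB)^{\perp})$. Because $K^{-1}$ is an isometry it commutes with closure, and a one-line computation shows that $K^{-1}\tilde{B}^{*}J$ sends each $(y,w)\in G(T^{*})$ to $(y,B^{*}w)$. The range of this map is precisely $G(B^{*}T^{*})$, so $(TB)^{*}=\overline{B^{*}T^{*}}$ as linear relations. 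The main obstacle is keeping the signs and the order of the flips straight; beyond that, no further analytic input is needed.

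The remaining statements then follow as corollaries. A closed linear relation is (the graph of) an operator if and only if its multivalued part is trivial, and for an adjoint this is equivalent to the pre-adjoint operator being densely defined. Hence $TB$ is densely defined iff $(TB)^{*}$ is single-valued iff $\overline{B^{*}T^{*}}$ is single-valued iff $B^{*}T^{*}$ is closable. Finally, if $T$ is densely defined then $T^{*}$ is a closed (single-valued) operator, and if additionally $B^{*}$ is injective with closed range, then \prettyref{lem:BTclosed} applied to $T^{*}$ and $B^{*}$ shows that $B^{*}T^{*}$ is already closed. Therefore $(TB)^{*}=\overline{B^{*}T^{*}}=B^{*}T^{*}$; in particular $(TB)^{*}$ is single-valued, which by the equivalence just proved forces $TB$ to be densely defined.
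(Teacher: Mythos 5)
Your proposal is correct, but it takes a more self-contained route than the paper. The paper proves the identity $(TB)^{*}=\overline{B^{*}T^{*}}$ and the density/closability equivalence purely by citation (to \cite[Theorem 2.3.4]{EvoEq} and \cite[Lemma 2.2.7]{EvoEq}), and only the final closedness claim is argued in-text, namely exactly as you do via \prettyref{lem:BTclosed} applied to $T^{*}$ and $B^{*}$. You instead supply a full proof of the cited facts by working with graphs as closed subspaces: the observation $G(TB)=\tilde{B}^{-1}(G(T))$ for the block operator $\tilde{B}(x,y)=(Bx,y)$, the duality $\bigl(\tilde{B}^{-1}(M)\bigr)^{\perp}=\overline{\tilde{B}^{*}(M^{\perp})}$ for closed $M$, and the flip unitaries realising adjoints as relations; the computation $K^{-1}\tilde{B}^{*}J(y,w)=(y,B^{*}w)$ then yields $G((TB)^{*})=\overline{G(B^{*}T^{*})}$. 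This buys a proof valid verbatim for non-densely-defined $T$ (so $T^{*}$ a relation), which is precisely the generality the statement claims, and your derivation of the equivalence from $\operatorname{mul}((TB)^{*})=\dom(TB)^{\perp}$ is the standard content of the cited lemma. The only point worth making explicit is that ``$B^{*}T^{*}$ is a closable operator'' packages two things --- single-valuedness and closability of the relation $B^{*}T^{*}$ --- both of which your chain of equivalences does cover, since single-valuedness of $\overline{B^{*}T^{*}}$ is equivalent to $B^{*}T^{*}$ being a (necessarily single-valued) closable operator.
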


\begin{proof}
The first statement is a consequence of \cite[Theorem 2.3.4]{EvoEq}.
The stated equivalence follows from $(TB)^{*}=\overline{B^{*}T^{*}}$
in conjunction with \cite[Lemma 2.2.7]{EvoEq}. Finally, the statement
containing $B^{*}T^{*}$ closed is a direct consequence of \prettyref{lem:BTclosed}.
\end{proof}
\begin{rem}
\label{rem:Bonto}The assumptions on $B^{*}$ are equivalent to $B$
being onto. Indeed, by the closed range theorem, the closed range
of $B$ is then inherited by $B^{*}$ and the equation $H_{0}=\ker(B^{*})\oplus\overline{\ran}(B)$
yields that (almost) surjectivity of $B$ is equivalent to injectivity
of $B^{*}$.
\end{rem}

\begin{thm}
\label{thm:adjointB*TB}Let $T\colon\dom(T)\subseteq H_{0}\to H_{0}$
be densely defined and closed, \textbf{$B\colon H_{0}\to H_{0}$}
be bounded, linear operator mapping onto $H_{0}$. Then
\[
\left(B^{*}TB\right)^{*}=B^{*}T^{*}B.
\]
\end{thm}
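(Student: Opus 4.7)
The plan is to decompose $B^{*}TB = (B^{*}T)\cdot B$ and apply \prettyref{thm:adjointTB} with $B^{*}T$ playing the role of the closed operator and $B$ the bounded one on the right. This reduces the task to identifying the adjoint $(B^{*}T)^{*}$.

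First, I would use \prettyref{rem:Bonto} to translate surjectivity of $B$ into the assertion that $B^{*}$ is one-to-one with closed range. \prettyref{lem:BTclosed} then shows that $S \coloneqq B^{*}T$ is closed, and $S$ is densely defined since $\dom(S) = \dom(T)$ is dense in $H_{0}$. Thus the hypotheses of \prettyref{thm:adjointTB} (strong form) are met for the pair $(S,B)$, yielding
\[
(B^{*}TB)^{*} = (SB)^{*} = B^{*}S^{*} = B^{*}(B^{*}T)^{*}.
\]

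Second, I would compute $(B^{*}T)^{*}$ directly via the defining adjoint relation: using that for $u \in \dom(T)$ and $x\in H_{0}$ one has $\langle B^{*}Tu, x\rangle = \langle Tu, Bx\rangle$, one sees that $x \in \dom((B^{*}T)^{*})$ with $(B^{*}T)^{*}x = y$ if and only if $Bx \in \dom(T^{*})$ and $T^{*}Bx = y$. Hence $(B^{*}T)^{*} = T^{*}B$ as operators, and substituting into the display above yields $(B^{*}TB)^{*} = B^{*}T^{*}B$ as claimed.

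The only piece of real work is the direct identification $(B^{*}T)^{*} = T^{*}B$, which is a routine adjoint computation provided domains are tracked carefully; everything else is a formal consequence of \prettyref{rem:Bonto}, \prettyref{lem:BTclosed}, and \prettyref{thm:adjointTB}. Note that there is no need to separately verify that $B^{*}T^{*}B$ is closed, since it is presented as the adjoint of a densely defined operator and hence automatically closed.
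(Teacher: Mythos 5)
Your proof is correct and follows the paper's route almost exactly: the same decomposition $B^{*}TB=(B^{*}T)B$, the same use of \prettyref{rem:Bonto} and \prettyref{lem:BTclosed} to see that $B^{*}T$ is closed and densely defined, and the same appeal to the strong form of \prettyref{thm:adjointTB} to peel off the right factor $B$. The one place you diverge is the identification $(B^{*}T)^{*}=T^{*}B$: you obtain it by a direct computation from the defining relation of the adjoint (equivalently, the standard identity $(CD)^{*}=D^{*}C^{*}$ for $C$ bounded and everywhere defined and $D$ densely defined, applied with $C=B^{*}$, $D=T$), and for this sub-step you need no hypothesis on $B$ beyond boundedness. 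The paper instead recycles its own machinery: it applies \prettyref{thm:adjointTB} to $T^{*}B$ to get $(T^{*}B)^{*}=\overline{B^{*}T}=B^{*}T$ and then takes adjoints once more, which forces it to track closedness of $T^{*}B$ to justify $(T^{*}B)^{**}=T^{*}B$. Your version is the more elementary of the two. Your closing remark that closedness of $B^{*}T^{*}B$ comes for free from the final identity is also fine; the paper verifies it directly via \prettyref{lem:BTclosed} because it first arrives at the expression $\overline{B^{*}(B^{*}T)^{*}}$ and must remove the closure bar, a step you bypass by invoking the strong form of \prettyref{thm:adjointTB} from the outset.
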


\begin{proof}
By \prettyref{lem:BTclosed} and \prettyref{rem:Bonto}, $B^{*}T$
is closed. It is -- trivially -- densely defined as so is $T$.
Hence, by \prettyref{thm:adjointTB}, we deduce
\[
\left((B^{*}T)B\right)^{*}=\overline{B^{*}(B^{*}T)^{*}}.
\]
Next, since $\left(B^{*}T\right)^{*}$ is a closed linear operator
as the adjoint of a densely defined operator and as $B^{*}$ is one-to-one
with closed range (see \prettyref{rem:Bonto}), $B^{*}(B^{*}T)^{*}$
is closed by \prettyref{lem:BTclosed}; i.e., $\overline{B^{*}(B^{*}T)^{*}}=B^{*}(B^{*}T)^{*}$.
Next, we compute $(B^{*}T)^{*}$. For this, using \prettyref{thm:adjointTB}
again, we deduce
\[
\left(T^{*}B\right)^{*}=\overline{B^{*}T^{**}}=\overline{B^{*}\overline{T}}=\overline{B^{*}T}=B^{*}T,
\]
as $B^{*}T$ is closed, again by \prettyref{lem:BTclosed}. Computing
adjoints on both sides, we infer
\[
T^{*}B=\left(T^{*}B\right)^{**}=\left(B^{*}T\right)^{*},
\]
where we used that $T^{*}B$ is closed. As a consequence of the above,
we get
\[
\left((B^{*}T)B\right)^{*}=B^{*}(B^{*}T)^{*}=B^{*}T^{*}B.\tag*{{\qedhere}}
\]
\end{proof}
\begin{prop}
Let $T\colon\dom(T)\subseteq H_{0}\to H_{0}$ be densley defined and
closed and $B\colon H_{0}\to H_{0}$ be a topological isomorphism.
If $D\subseteq\dom(T)$ is a core for $T$, then $B^{-1}[D]$ is a
core for $B^{*}TB.$
\end{prop}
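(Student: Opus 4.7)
The plan is to leverage that a topological isomorphism $B$ and its adjoint $B^*$ are both bounded linear bijections with bounded inverses, so multiplication by $B$ and $B^*$ is a bi-Lipschitz equivalence between $\dom(T)$ and $\dom(B^*TB)$ in the graph norm.

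First I would verify that $B^*TB$ is a closed operator so that the notion of a core makes sense. Since $B^*$ is a topological isomorphism, it is bounded, one-to-one, and has closed (full) range; by \prettyref{lem:BTclosed}, $B^*T$ is closed. Then, given a sequence $(x_n)$ in $\dom(B^*TB)$ with $x_n\to x$ and $B^*TBx_n\to y$, continuity of $B$ yields $Bx_n\to Bx$, while $(B^*T)(Bx_n)\to y$; the closedness of $B^*T$ then gives $Bx\in\dom(T)$, i.e., $x\in\dom(B^*TB)$, together with $B^*TBx=y$.

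Next, for the core property itself, take an arbitrary $x\in\dom(B^*TB)$, so that $Bx\in\dom(T)$. Since $D$ is a core for $T$, there exists a sequence $(y_n)$ in $D$ with $y_n\to Bx$ and $Ty_n\to TBx$ in $H_0$. Set $x_n\coloneqq B^{-1}y_n\in B^{-1}[D]$. Then $Bx_n=y_n\in D\subseteq\dom(T)$, so $x_n\in\dom(B^*TB)$. The continuity of $B^{-1}$ gives $x_n\to B^{-1}Bx=x$, and the continuity of $B^*$ gives
\[
B^{*}TBx_{n}=B^{*}Ty_{n}\longrightarrow B^{*}TBx.
\]
Hence $x_n\to x$ in the graph norm of $B^*TB$, establishing that $B^{-1}[D]$ is dense in $\dom(B^*TB)$ with respect to this graph norm; together with the closedness verified above, this is precisely the statement that $B^{-1}[D]$ is a core for $B^*TB$.

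There is no real obstacle here: the only mildly non-trivial step is ensuring the closedness of $B^*TB$, for which \prettyref{lem:BTclosed} is tailor-made. The remainder is a transport-of-structure argument using that $B$ and $B^*$ (and their inverses) are bounded, so graph-norm convergence is preserved under the substitution $y_n=Bx_n$.
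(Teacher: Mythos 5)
Your proposal is correct and follows essentially the same route as the paper: transport a graph-norm approximating sequence $(y_{n})$ in $D$ for $Bx$ back via $x_{n}=B^{-1}y_{n}$ and use the continuity of $B^{-1}$ and $B^{*}$. The only difference is that you additionally verify the closedness of $B^{*}TB$ explicitly (a harmless and reasonable addition), while the paper leaves this implicit, having it available from \prettyref{lem:BTclosed} and \prettyref{thm:adjointB*TB}.
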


\begin{proof}
$B$ being a topological isomorphism, $B^{-1}$ maps dense sets onto
dense sets; thus $B^{-1}[D]$ is dense in $H_{0}$. Also it is elementary
to see that $B^{-1}[D]\subseteq\dom(B^{*}TB)$. Finally, let $x\in\dom(B^{*}TB)=\dom(TB).$
Then $Bx\in\dom(T).$ By assumption, we find $(y_{n})_{n}$ in $D$
such that $y_{n}\to Bx$ and $Ty_{n}\to TBx$ in $H_{0}$ as $n\to\infty$.
Defining $x_{n}\coloneqq B^{-1}y_{n}\in B^{-1}[D]$, we get $x_{n}\to B^{-1}Bx=x$
and $TBx_{n}=Ty_{n}\to TBx$ as $n\to\infty.$ The continuity of $B^{*}$
yields the assertion.
\end{proof}

\section{Adjoints of Sums of m-accretive Operators}

This section is devoted to computing the adjoint of a sum of two (unbounded)
operators $T$ and $S$ both densely defined and closed on a Hilbert
space $H$. The aim is to provide conditions so that
\[
(S+T)^{*}=\overline{{S^{*}+T^{*}}}.
\]
We refer to \cite{Picard2022}, where several conditions for this
equality were given. The main theorem, which in turn is relevant to
the applications we have in mind, reads as follows:
\begin{thm}
\label{thm:adjoint_sum_pre}Let $T\colon\dom(T)\subseteq H_{0}\to H_{1}$
and $S\colon\dom(S)\subseteq H_{0}\to H_{1}$ be two densely defined
closed operators such that $\dom(S)\cap\dom(T)$ is dense. Moreover,
we assume that there exist families $(L_{\varepsilon})_{\varepsilon>0}$
$\left(K_{\varepsilon}\right)_{\varepsilon>0}$ and $(\tilde{K}_{\varepsilon})_{\varepsilon>0}$
in $L(H_{1})$ and $\left(R_{\varepsilon}\right)_{\varepsilon>0}$,
in $L(H_{0})$ such that $L_{\varepsilon}\to1_{H_{1}},\,R_{\varepsilon}\to1_{H_{0}}$
and $K_{\varepsilon},\tilde{K}_{\varepsilon}\to0$ in the weak operator
topology as $\varepsilon\to0.$ Moreover, assume that 
\begin{align}
L_{\varepsilon}S & \subseteq SR_{\varepsilon}+K_{\varepsilon},\nonumber \\
L_{\varepsilon}T & \subseteq TR_{\varepsilon}+\tilde{K}_{\varepsilon}\label{eq:trans}
\end{align}
and 
\begin{equation}
L_{\varepsilon}^{\ast}[\dom\left((S+T)^{\ast}\right)]\subseteq\dom(S^{\ast})\cap\dom(T^{\ast}).\label{eq:reg}
\end{equation}
Then 
\[
(S+T)^{\ast}=\overline{S^{\ast}+T^{\ast}}.
\]
\end{thm}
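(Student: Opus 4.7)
The plan is to prove both inclusions in $(S+T)^*=\overline{S^*+T^*}$. The easy direction $\overline{S^*+T^*}\subseteq (S+T)^*$ is routine: for any $y\in \dom(S^*)\cap\dom(T^*)$ and $u\in \dom(S)\cap\dom(T)$ one has $\langle(S+T)u, y\rangle=\langle u,(S^*+T^*)y\rangle$, so density of $\dom(S)\cap\dom(T)$ gives $S^*+T^*\subseteq (S+T)^*$, and closedness of $(S+T)^*$ upgrades this to the desired inclusion of the closure.

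For the substantive direction $(S+T)^*\subseteq\overline{S^*+T^*}$, I would fix $y\in \dom((S+T)^*)$, set $y^*\coloneqq (S+T)^*y$ and regularize by $y_\varepsilon\coloneqq L_\varepsilon^* y$. Hypothesis \prettyref{eq:reg} places $y_\varepsilon$ in $\dom(S^*)\cap\dom(T^*)=\dom(S^*+T^*)$. The heart of the argument is then to compute $(S^*+T^*)y_\varepsilon$ by pairing it against an arbitrary $u\in \dom(S)\cap\dom(T)$: by \prettyref{eq:reg} the pairing equals $\langle L_\varepsilon(S+T)u, y\rangle$, and the intertwining \prettyref{eq:trans} pushes $L_\varepsilon$ through $S+T$ onto the $R_\varepsilon$ side modulo the error terms $K_\varepsilon,\tilde K_\varepsilon$. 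Together with density of $\dom(S)\cap\dom(T)$ this yields
\[
(S^*+T^*)y_\varepsilon \;=\; R_\varepsilon^* y^* + (K_\varepsilon+\tilde K_\varepsilon)^* y.
\]

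Weak operator convergence is invariant under taking adjoints (since $\langle L_\varepsilon^* x, z\rangle = \langle x, L_\varepsilon z\rangle$), so $L_\varepsilon^*\to 1_{H_1}$, $R_\varepsilon^*\to 1_{H_0}$ and $K_\varepsilon^*,\tilde K_\varepsilon^*\to 0$ in the weak operator topology as $\varepsilon\to 0$. Hence $y_\varepsilon\to y$ and $(S^*+T^*)y_\varepsilon\to y^*$ weakly. The pair $(y,y^*)$ therefore lies in the weak closure of the graph of $S^*+T^*$; since this graph is a linear subspace and hence convex, Mazur's theorem identifies its weak and norm closures, placing $(y,y^*)$ in the graph of $\overline{S^*+T^*}$, which is exactly what is needed.

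The principal obstacle is the displayed identity for $(S^*+T^*)y_\varepsilon$: without \prettyref{eq:reg} the left-hand side is not even defined, while \prettyref{eq:trans} is needed both to ensure that $R_\varepsilon$ preserves $\dom(S)\cap\dom(T)$ and to move $L_\varepsilon$ past the unbounded sum inside the inner product. The remaining ingredients, namely the easy adjoint inclusion, the symmetry of weak operator convergence under adjoints, and the Mazur passage from weak to norm closure for a linear subspace, are standard functional-analytic routine.
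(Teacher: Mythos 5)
Your argument is correct and is essentially the paper's own proof: the easy inclusion via density, then regularisation $y_\varepsilon=L_\varepsilon^*y$ using \prettyref{eq:reg}, the identity $(S^*+T^*)y_\varepsilon=R_\varepsilon^*(S+T)^*y+(K_\varepsilon+\tilde K_\varepsilon)^*y$ obtained by pairing against $\dom(S)\cap\dom(T)$ and invoking \prettyref{eq:trans}, and finally weak convergence of the graph elements (the paper leaves the Mazur/convexity step implicit, but it is the same closing move). No gaps.
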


\begin{proof}
Since in general $\overline{S^{\ast}+T^{\ast}}\subseteq(S+T)^{\ast}$
(see \cite[Theoerem 2.3.2]{EvoEq}) it suffices to prove the remaining
inclusion. So, let $u\in\dom(S+T)^{\ast}$ and set $u_{\varepsilon}\coloneqq L_{\varepsilon}^{\ast}u\in\dom(S^{\ast})\cap\dom(T^{\ast}).$
Let $v\in\dom(S)\cap\dom(T).$ We compute 
\begin{align*}
\langle(S^{\ast}+T^{\ast})u_{\varepsilon},v\rangle & =\langle L_{\varepsilon}^{\ast}u,(S+T)v\rangle\\
 & =\langle u,L_{\varepsilon}(S+T)v\rangle\\
 & =\langle u,(S+T)R_{\varepsilon}v+(K_{\varepsilon}+\tilde{K}_{\varepsilon})v\rangle\\
 & =\langle\left(R_{\varepsilon}^{\ast}(S+T)^{\ast}+(K_{\varepsilon}+\tilde{K}_{\varepsilon})^{\ast}\right)u,v\rangle
\end{align*}
and since $\dom(S)\cap\dom(T)$ is dense, we infer 
\[
(S^{\ast}+T^{\ast})u_{\varepsilon}=\left(R_{\varepsilon}^{\ast}(S+T)^{\ast}+(K_{\varepsilon}+\tilde{K}_{\varepsilon})^{\ast}\right)u\rightharpoonup(S+T)^{\ast}u,
\]
where we have used $R_{\varepsilon}^{\ast}\to1_{H_{0}}$ and $(K_{\varepsilon}+\tilde{K}_{\varepsilon})^{\ast}\to0$
in the weak operator topology. Since also $u_{\varepsilon}\rightharpoonup u$
(use again $L_{\varepsilon}^{\ast}\to1_{H_{1}}$ in the weak operator
topology), we infer that $u\in\dom\left(\overline{S^{\ast}+T^{\ast}}\right)$
with 
\[
\left(\overline{S^{\ast}+T^{\ast}}\right)u=(S+T)^{\ast}u.\tag*{\qedhere}
\]
 
\end{proof}
\begin{rem}
\label{rem:reg_for_free}If $SR_{\varepsilon}$ is bounded and $\dom(S)\cap\dom(T)$
is a core for $T$ in the above theorem, then \prettyref{eq:reg}
holds true. Indeed, from \prettyref{eq:trans} we infer 
\[
\left(SR_{\varepsilon}+K_{\varepsilon}\right)^{\ast}\subseteq\left(L_{\varepsilon}S\right)^{\ast}=S^{\ast}L_{\varepsilon}^{\ast}.
\]
If now $SR_{\varepsilon}$ is bounded, the operator on the left-hand
side in the above inclusion is bounded, and hence, the operator on
the right-hand side is defined on $H_{1}$, meaning that $\ran(L_{\varepsilon}^{\ast})\subseteq\dom(S^{\ast})$.
Hence, for $u\in\dom(T)\cap\dom(S)$ and $v\in\dom((S+T)^{\ast})$
we compute 
\begin{align*}
\langle Tu,L_{\varepsilon}^{\ast}v\rangle & =\langle(S+T)u-Su,L_{\varepsilon}^{\ast}v\rangle\\
 & =\langle L_{\varepsilon}(S+T)u,v\rangle-\langle u,S^{\ast}L_{\varepsilon}^{\ast}v\rangle\\
 & =\langle(S+T)R_{\varepsilon}u+(K_{\varepsilon}+\tilde{K}_{\varepsilon})u,v\rangle-\langle u,S^{\ast}L_{\varepsilon}^{\ast}v\rangle\\
 & =\langle u,R_{\varepsilon}^{\ast}(S+T)^{\ast}v+(K_{\varepsilon}+\tilde{K}_{\varepsilon})^{\ast}v-S^{\ast}L_{\varepsilon}^{\ast}v\rangle
\end{align*}
and since $\dom(T)\cap\dom(S)$ is a core for $T$, we infer that
also $L_{\varepsilon}^{\ast}v\in\dom(T^{\ast}).$
\end{rem}

\begin{thm}
\label{thm:adjointofsum}Let $S\colon\dom(S)\subseteq H\to H$ and
$T\colon\dom(T)\subseteq H\to H$ both m-accretive. If $(1+T)^{-1}(1+S)^{-1}=(1+S)^{-1}(1+T)^{-1}$,
then $\dom(S)\cap\dom(T)$ is dense in $H$ and
\[
(S+T)^{*}=\overline{{S^{*}+T^{*}}}.
\]
\end{thm}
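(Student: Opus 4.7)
The plan is to verify the hypotheses of \prettyref{thm:adjoint_sum_pre} with $H_{0}=H_{1}=H$ and the canonical choice
\[
R_{\varepsilon}\coloneqq L_{\varepsilon}\coloneqq (1+\varepsilon S)^{-1}(1+\varepsilon T)^{-1},\qquad K_{\varepsilon}\coloneqq\tilde{K}_{\varepsilon}\coloneqq 0,
\]
letting $\varepsilon\to 0$. Since both factors are contractions by m-accretivity, so is each $R_{\varepsilon}$, and $\ran(R_{\varepsilon})\subseteq\dom(S)\cap\dom(T)$. The standard strong convergence $(1+\varepsilon A)^{-1}\to 1_{H}$ as $\varepsilon\to 0$ for m-accretive $A$ gives $R_{\varepsilon}\to 1_{H}$ strongly (and thus weakly); in particular, $R_{\varepsilon}x\to x$ together with $R_{\varepsilon}x\in\dom(S)\cap\dom(T)$ for every $x\in H$ already delivers the asserted density.

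The main obstacle is to upgrade the single commutation in the hypothesis to the family of identities $(1+\lambda T)^{-1}(1+\mu S)^{-1}=(1+\mu S)^{-1}(1+\lambda T)^{-1}$ for all $\lambda,\mu>0$, together with the invariance $(1+\lambda T)^{-1}[\dom(S)]\subseteq\dom(S)$ and the compatibility $S(1+\lambda T)^{-1}=(1+\lambda T)^{-1}S$ on $\dom(S)$, and symmetrically with $S,T$ exchanged. At $\lambda=1$ the invariance is immediate: for $u\in\dom(S)$,
\[
(1+T)^{-1}u=(1+S)^{-1}(1+T)^{-1}(1+S)u\in\ran((1+S)^{-1})=\dom(S),
\]
whereupon the identity $S(1+S)^{-1}=1-(1+S)^{-1}$ yields the compatibility. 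To bootstrap to arbitrary $\lambda$, apply $(1+S)^{-1}$ to the resolvent equation $(1+\lambda T)u=f$ and use the symmetric compatibility to deduce $(1+\lambda T)(1+S)^{-1}u=(1+S)^{-1}f$, giving $(1+\lambda T)^{-1}(1+S)^{-1}=(1+S)^{-1}(1+\lambda T)^{-1}$; running the same argument once more with $S$ and $T$ exchanged produces the full family.

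With these commutations in hand, the intertwining relations \eqref{eq:trans} reduce to $R_{\varepsilon}S\subseteq SR_{\varepsilon}$ and $R_{\varepsilon}T\subseteq TR_{\varepsilon}$; for $u\in\dom(S)$ one simply moves $S$ past both factors of $R_{\varepsilon}$ using the invariance $(1+\varepsilon T)^{-1}[\dom(S)]\subseteq\dom(S)$ and $S(1+\varepsilon S)^{-1}v=(1+\varepsilon S)^{-1}Sv$ for $v\in\dom(S)$. Taking adjoints of the hypothesis yields the same commutation for $T^{*}$ and $S^{*}$, so the same reasoning shows that $R_{\varepsilon}^{*}=(1+\varepsilon T^{*})^{-1}(1+\varepsilon S^{*})^{-1}$ maps all of $H$ into $\dom(S^{*})\cap\dom(T^{*})$; in particular the regularity condition \eqref{eq:reg} holds. \prettyref{thm:adjoint_sum_pre} then delivers $(S+T)^{*}=\overline{S^{*}+T^{*}}$.
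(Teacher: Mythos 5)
Your argument is correct and rests on the same engine as the paper's proof, namely \prettyref{thm:adjoint_sum_pre} with resolvent regularisers and $K_{\varepsilon}=\tilde{K}_{\varepsilon}=0$, but the implementation is genuinely different in two places. The paper makes the asymmetric choice $L_{\varepsilon}=R_{\varepsilon}=(1+\varepsilon S)^{-1}$: then \prettyref{eq:trans} only needs the one-parameter commutation $(1+\varepsilon S)^{-1}T\subseteq T(1+\varepsilon S)^{-1}$ established in \prettyref{prop:commucore}, the density of $\dom(S)\cap\dom(T)$ is obtained by showing it is a core for $T$, and \prettyref{eq:reg} comes for free from \prettyref{rem:reg_for_free} because $SR_{\varepsilon}=\varepsilon^{-1}\left(1-(1+\varepsilon S)^{-1}\right)$ is bounded. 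You instead take the symmetric product $L_{\varepsilon}=R_{\varepsilon}=(1+\varepsilon S)^{-1}(1+\varepsilon T)^{-1}$, which obliges you to bootstrap the single hypothesis to the full two-parameter family of commuting resolvents together with the invariance and compatibility statements for both operators (your bootstrap via the compatibility $(1+S)^{-1}T\subseteq T(1+S)^{-1}$ and the resolvent equation is sound; the paper delegates the analogous rescaling in $\varepsilon$ to a cited lemma). In exchange, density of $\dom(S)\cap\dom(T)$ drops out of $\ran(R_{\varepsilon})$ without any core argument, and -- more notably -- you verify \prettyref{eq:reg} without \prettyref{rem:reg_for_free} by dualising the hypothesis: $L_{\varepsilon}^{*}=(1+\varepsilon T^{*})^{-1}(1+\varepsilon S^{*})^{-1}$ commutes for the same reason and therefore maps all of $H$, not merely $\dom((S+T)^{*})$, into $\dom(S^{*})\cap\dom(T^{*})$. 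The only cosmetic blemish is an ordering issue: you assert $\ran(R_{\varepsilon})\subseteq\dom(S)\cap\dom(T)$ in your first paragraph, although the inclusion into $\dom(T)$ relies on the commutation at parameter $\varepsilon$ that you only justify afterwards.
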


Before we prove this result, we draw some elementary consequences
of the commutator condition. A first consequence of this will be that
$S+T$ is densely defined. 
\begin{prop}
\label{prop:commucore}Under the conditions of \prettyref{thm:adjointofsum},
the following holds:
\begin{enumerate}
\item For all $\varepsilon>0$, we have 
\[
(1+\varepsilon S)^{-1}T\subseteq T(1+\varepsilon S)^{-1}.
\]
\item $\dom(S)\cap\dom(T)$ is a core for $T$. In particular, $\dom(S)\cap\dom(T)$
is dense in $H$.
\end{enumerate}
\end{prop}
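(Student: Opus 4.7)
The plan is to derive from the hypothesis $(1+T)^{-1}(1+S)^{-1}=(1+S)^{-1}(1+T)^{-1}$ the analogous commutation with $(1+\varepsilon S)^{-1}$ in place of $(1+S)^{-1}$, and then to use this to regularise elements of $\dom(T)$ into $\dom(S)\cap\dom(T)$ via a Yosida-type approximation $(1+\varepsilon S)^{-1}\to 1_H$.

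For part (i) I would first extract the intermediate inclusion
\[
(1+T)^{-1}S\subseteq S(1+T)^{-1}.
\]
Given $x\in\dom(S)$, write $x=(1+S)^{-1}y$ with $y\coloneqq(1+S)x$; the commutation hypothesis yields $(1+T)^{-1}x=(1+S)^{-1}(1+T)^{-1}y\in\ran((1+S)^{-1})=\dom(S)$, and applying $(1+S)$ and cancelling the $(1+T)^{-1}x$-contribution gives $S(1+T)^{-1}x=(1+T)^{-1}Sx$. Linearity then upgrades this to $(1+T)^{-1}(1+\varepsilon S)x=(1+\varepsilon S)(1+T)^{-1}x$ for every $x\in\dom(S)$, i.e.\ $(1+T)^{-1}(1+\varepsilon S)\subseteq(1+\varepsilon S)(1+T)^{-1}$; composing with $(1+\varepsilon S)^{-1}$ on both sides converts this into the bounded identity
\[
(1+T)^{-1}(1+\varepsilon S)^{-1}=(1+\varepsilon S)^{-1}(1+T)^{-1}\quad\text{on }H.
\]
Repeating the very same argument now with the roles of $S$ and $T$ interchanged and with $(1+\varepsilon S)^{-1}$ in place of $(1+S)^{-1}$ yields part (i).

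For part (ii) I would smooth out: given $x\in\dom(T)$, set $x_{\varepsilon}\coloneqq(1+\varepsilon S)^{-1}x$. Since $\ran((1+\varepsilon S)^{-1})=\dom(S)$, one has $x_{\varepsilon}\in\dom(S)$; by part (i), $x_{\varepsilon}\in\dom(T)$ as well and
\[
Tx_{\varepsilon}=T(1+\varepsilon S)^{-1}x=(1+\varepsilon S)^{-1}Tx.
\]
Thus $x_\varepsilon\in\dom(S)\cap\dom(T)$. Sending $\varepsilon\to0$, the standard facts $\|(1+\varepsilon S)^{-1}\|\leq1$ together with $(1+\varepsilon S)^{-1}y-y=-\varepsilon(1+\varepsilon S)^{-1}Sy$ for $y\in\dom(S)$, plus density of $\dom(S)$, give $(1+\varepsilon S)^{-1}\to1_H$ strongly. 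Hence $x_{\varepsilon}\to x$ and $Tx_{\varepsilon}\to Tx$, so $\dom(S)\cap\dom(T)$ is a core for $T$, and in particular dense in $H$.

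The only delicate point is the passage from the hypothesis at $\varepsilon=1$ to all $\varepsilon>0$ in part (i); everything else is routine once that commutation is available. The key observation making this step work is that commutation of the two resolvents forces $(1+T)^{-1}$ to commute with $S$ on its domain, hence with the polynomial $1+\varepsilon S$, and therefore with every resolvent of $\varepsilon S$.
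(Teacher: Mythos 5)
Your argument is correct and follows essentially the same route as the paper: establish the commutation relation of part (i) and then use $(1+\varepsilon S)^{-1}\to 1_H$ strongly as a Yosida-type regularisation for part (ii). The only difference is the passage from $\varepsilon=1$ to general $\varepsilon>0$: the paper derives $(1+S)^{-1}T\subseteq T(1+S)^{-1}$ and then invokes \cite[Lemma 9.3.3 (a)]{EvoEq}, whereas you give a self-contained derivation by showing $(1+T)^{-1}$ commutes with $S$ on $\dom(S)$ and hence with every resolvent $(1+\varepsilon S)^{-1}$ — a perfectly valid (and citation-free) substitute.
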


\begin{proof}
For the first statement, we observe that $(1+T)^{-1}(1+S)^{-1}=(1+S)^{-1}(1+T)^{-1}$
yields 
\[
(1+S)^{-1}(1+T)\subseteq(T+1)(1+S)^{-1}.
\]
As a consequence, 
\[
(1+S)^{-1}T\subseteq T(1+S)^{-1}.
\]
By \cite[Lemma 9.3.3 (a)]{EvoEq}, we deduce for all $\varepsilon>0$
that
\[
(1+\varepsilon S)^{-1}T=\frac{1}{\varepsilon}(\frac{1}{\varepsilon}+S)^{-1}T\subseteq T\frac{1}{\varepsilon}(\frac{1}{\varepsilon}+S)^{-1}=T(1+\varepsilon S)^{-1}.
\]

The second statement is based on the observation that $(1+\varepsilon S)^{-1}\to1$
as $\varepsilon\to0+$ in the strong operator topology (this follows
from the strong convergence on $\dom(S)$ and the uniform boundedness
of the resolvents). Let now $x\in\dom(T)$ and define $x_{\varepsilon}\coloneqq(1+\varepsilon S)^{-1}x.$
Then, $x_{\varepsilon}\to x$ as $\varepsilon\to0+$ and by part 1
of the present proposition, we deduce $x_{\varepsilon}\in\dom(T)$
and 
\[
Tx_{\varepsilon}=(1+\varepsilon S)^{-1}Tx\to Tx,
\]
yielding that $\dom(T)\cap\dom(S)$ is a core for $T$.
\end{proof}
\begin{proof}[Proof of \prettyref{thm:adjointofsum}]
 We apply \prettyref{thm:adjoint_sum_pre}. By \prettyref{prop:commucore}
we see that for $L_{\varepsilon}\coloneqq R_{\varepsilon}\coloneqq(1+\varepsilon S)^{-1}$
and $K_{\varepsilon}=\tilde{K}_{\varepsilon}=0$ the relations \prettyref{eq:trans}
are satisfied. Furthermore, by \prettyref{prop:commucore} we have
that $\dom(S)\cap\dom(T)$ is dense and a core for $T.$ Since clearly
$SR_{\varepsilon}$ is bounded, \prettyref{rem:reg_for_free} gives
that also \prettyref{eq:reg} is satisfied. Thus, the assertion follows
from \prettyref{thm:adjoint_sum_pre}. 
\end{proof}

\section{Applications to Evolutionary Equations}

This section is devoted to apply the previous findings to operator
equations in weighted, vector-valued $L_{2}$-type spaces. The general
setting can be found in \cite{Primer,EvoEq}. Throughout, let $H$
be a Hilbert space and for $\rho\in\R$ we let 
\[
L_{2,\rho}(\R;H)\coloneqq\{f\in L_{2,\textnormal{loc}}(\R;H);\int_{\R}\|f(t)\|_{H}^{2}\exp(-2\rho t)dt<\infty\},
\]
endowed with the obvious norm and corresponding scalar product. We
define 
\[
\partial_{0}\colon H_{\rho}^{1}(\R;H)\subseteq L_{2,\rho}(\R;H)\to L_{2,\rho}(\R;H),\phi\mapsto\phi'.
\]
For $\rho>0$, it can be shown that $\partial_{0}$ is m-accretive
with $\Re\partial_{0}=\rho$. 

For a bounded, strongly measurable, operator-valued function $M\colon\R\to L(H)$,
we denote by
\[
M(m_{0})\in L(L_{2,\rho}(\R;H))
\]
the associated multiplication operator of multiplying by $M$. In
applications, $M$ will be induced by scalar-valued measurable functions;
that is, $M\in L_{\infty}(\R).$ We will work under the following
standing assumptions.
\begin{enumerate}
\item Let $A\colon\dom(A)\subseteq H\to H$ be $m$-accretive.
\item Let $M_{0},M_{1}\colon\R\to L(H)$ be strongly measurable and uniformly
bounded.
\item $M_{0}(m_{0})^{*}=M_{0}(m_{0})$.
\end{enumerate}
Here we have employed the custom to re-use the notation $A$ for the
(canonically) extended operator defined on $L_{2,\rho}(\R;H)$ with
domain $L_{2,\rho}(\R;\dom(A))$. The aim is to study the well-posedness
of non-autonomous problems of the form 
\begin{equation}
\left(\partial_{0}M_{0}(m_{0})+M_{1}(m_{0})+A\right)U=F\label{eq:prob}
\end{equation}
under suitable commutator conditions of $M_{0}(m_{0})$ with $\partial_{0}$
or with $A$. 

\subsection*{Bounded Commutator with $\partial_{0}$}

We begin to study the case when $M_{0}(m_{0})$ and $\partial_{0}$
have a bounded commuator; that is, we assume there exists a strongly
measurable and uniformly bounded mapping $M'_{0}\colon\R\to L(H)$
such that 
\[
M_{0}(m_{0})\partial_{0}\subseteq\partial M_{0}(m_{0})-M_{0}'(m_{0}).
\]

\begin{rem}
In \cite{Picard2013} it was shown that this assumption is equivalent
to the Lipschitz-continuity of $M_{0}$. In this case, $M_{0}$ is
differentiable almost everywhere and $M_{0}'$ is just the so-defined
derivative of $M_{0}.$
\end{rem}

Moreover, we impose the following accretivity condition on $M_{0}(m_{0})$
and $M_{1}(m_{0})$: 
\begin{equation}
\exists c>0,\,\rho_{0}>0\,\forall\rho\geq\rho_{0}:\,\rho M_{0}(t)+\frac{1}{2}M_{0}'(t)+M_{1}(t)\geq c\quad(t\in\R\text{ a.e.}).\label{eq:pos_def_Lipschitz}
\end{equation}

\begin{lem}
\label{lem:accretivity_non_auto_Lipschitz}Assume \prettyref{eq:pos_def_Lipschitz}.
Then for each $\rho\geq\rho_{0}$ the operator
\[
\partial_{0}M_{0}(m_{0})+M_{1}(m_{0})-c
\]
is accretive. Moreover, $\dom(\partial_{0})$ is a core for this operator.
\end{lem}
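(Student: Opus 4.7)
The strategy is to verify accretivity on the dense subspace $\dom(\partial_0)$ by an explicit computation of $\langle u,Tu\rangle_\rho$ for $T\coloneqq\partial_0 M_0(m_0)+M_1(m_0)-c$, exploiting self-adjointness of $M_0(m_0)$ together with $\partial_0^\ast = 2\rho-\partial_0$, and then to establish that $\dom(\partial_0)$ is a core for the natural domain $\dom(T)=\{v : M_0(m_0)v \in \dom(\partial_0)\}$ via a temporal Yosida approximation. Accretivity on all of $\dom(T)$ follows, since the sesquilinear form $(u,v)\mapsto\langle u,Tv\rangle_\rho$ is continuous in the graph norm of $T$.

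For the computation, the commutator hypothesis $M_0(m_0)\partial_0\subseteq \partial_0 M_0(m_0)-M_0'(m_0)$ forces $\dom(\partial_0)\subseteq\dom(\partial_0 M_0(m_0))$, so $M_0(m_0)$ leaves $\dom(\partial_0)$ invariant; in particular, for $u\in\dom(\partial_0)$ both $u$ and $M_0(m_0)u$ lie in $\dom(\partial_0)$. Applying $\partial_0+\partial_0^\ast=2\rho$ to the pair $(u,M_0(m_0)u)$, and then eliminating the arising term $\langle M_0(m_0)\partial_0 u,u\rangle_\rho$ by the commutator identity, I obtain
\[
\langle u,\partial_0 M_0(m_0)u\rangle_\rho = \rho\,\langle u,M_0(m_0)u\rangle_\rho + \tfrac{1}{2}\langle u,M_0'(m_0)u\rangle_\rho.
\]
Adding $\langle u,(M_1(m_0)-c)u\rangle_\rho$ and unfolding the $L_{2,\rho}$-inner product as a $t$-integral, the pointwise estimate \prettyref{eq:pos_def_Lipschitz} yields $\langle u,Tu\rangle_\rho\geq 0$ on $\dom(\partial_0)$.

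For the core property, let $u\in\dom(T)$ and set $u_\varepsilon\coloneqq R_\varepsilon u$ with $R_\varepsilon\coloneqq(1+\varepsilon\partial_0)^{-1}$; then $u_\varepsilon\in\dom(\partial_0)$, $u_\varepsilon\to u$, and the bounded summand $(M_1(m_0)-c)u_\varepsilon$ converges trivially. The essential step is to handle $\partial_0 M_0(m_0)u_\varepsilon$. From
\[
(1+\varepsilon\partial_0)M_0(m_0)u = M_0(m_0)(1+\varepsilon\partial_0)u + \varepsilon M_0'(m_0)u \qquad (u\in\dom(\partial_0)),
\]
applying $R_\varepsilon$ from the left and using $\dom(\partial_0)=\ran R_\varepsilon$ produces the intertwining identity $M_0(m_0)R_\varepsilon = R_\varepsilon M_0(m_0) + \varepsilon R_\varepsilon M_0'(m_0) R_\varepsilon$ on all of $L_{2,\rho}(\R;H)$. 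Applying $\partial_0$, using $\partial_0 R_\varepsilon g=R_\varepsilon \partial_0 g$ for $g\in\dom(\partial_0)$ (with $g=M_0(m_0)u$, legitimate since $u\in\dom(T)$) and $\varepsilon\partial_0 R_\varepsilon=I-R_\varepsilon$, gives
\[
\partial_0 M_0(m_0) R_\varepsilon u = R_\varepsilon\,\partial_0 M_0(m_0) u + (I-R_\varepsilon) M_0'(m_0) R_\varepsilon u.
\]
The first term converges to $\partial_0 M_0(m_0)u$ by strong convergence of $R_\varepsilon$, and the second to $0$ by a triangle-inequality argument comparing $R_\varepsilon u$ with $u$; hence $Tu_\varepsilon\to Tu$.

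The main obstacle is this last convergence: the naive attempt $\partial_0 M_0(m_0) R_\varepsilon u = M_0(m_0)\partial_0 R_\varepsilon u + M_0'(m_0) R_\varepsilon u$ fails because $\partial_0 R_\varepsilon u=\varepsilon^{-1}(u-R_\varepsilon u)$ is not uniformly bounded in $\varepsilon$. The intertwining formulation circumvents this by keeping the destabilising factor $\varepsilon$ paired with $\partial_0 R_\varepsilon$, where it collapses to the uniformly bounded operator $I-R_\varepsilon$ that tends to zero strongly.
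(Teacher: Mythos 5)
Your proposal is correct and takes essentially the same route as the paper: the accretivity estimate obtained by pairing $\partial_{0}+\partial_{0}^{\ast}=2\rho$ with the commutator relation to produce $\langle u,\partial_{0}M_{0}(m_{0})u\rangle=\rho\langle u,M_{0}(m_{0})u\rangle+\tfrac{1}{2}\langle u,M_{0}'(m_{0})u\rangle$, and the core property via the resolvent intertwining identity $M_{0}(m_{0})R_{\varepsilon}=R_{\varepsilon}M_{0}(m_{0})+\varepsilon R_{\varepsilon}M_{0}'(m_{0})R_{\varepsilon}$ with the error term collapsing to $(I-R_{\varepsilon})M_{0}'(m_{0})R_{\varepsilon}u\to0$, are precisely the paper's two steps. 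Your only addition is to make explicit the (routine) passage from accretivity on the core $\dom(\partial_{0})$ to accretivity on all of $\dom(T)$, which the paper leaves implicit.
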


\begin{proof}
If $u\in\dom(\partial_{0})$ we infer 
\begin{align*}
2\langle(\partial_{0}M_{0}(m_{0})+M_{1}(m_{0}))u,u\rangle & =\langle\left(\partial_{0}M_{0}(m_{0})+M_{0}(m_{0})\partial_{0}\right)u,u\rangle+\langle M_{0}'(m_{0})u,u\rangle+2\langle M_{1}(m_{0})u,u\rangle.
\end{align*}
Moreover, with $\partial_{0}^{*}=-\partial_{0}+2\rho$ (see \cite[Corollary 3.2.6]{EvoEq})
\begin{align*}
\langle\partial_{0}M_{0}(m_{0})u,u\rangle & =\langle u,M_{0}(m_{0})\partial_{0}^{\ast}u\rangle\\
 & =-\langle u,M_{0}(m_{0})\partial_{0}u\rangle+2\rho\langle u,M_{0}(m_{0})u\rangle,
\end{align*}
which gives 
\[
\langle\left(\partial_{0}M_{0}(m_{0})+M_{0}(m_{0})\partial_{0}\right)u,u\rangle=2\rho\langle u,M_{0}(m_{0})u\rangle.
\]
Summarising, we obtain 
\[
\langle(\partial_{0}M_{0}(m_{0})+M_{1}(m_{0}))u,u\rangle=\langle\left(\rho M_{0}(m_{0})+\frac{1}{2}M_{0}'(m_{0})+M_{1}(m_{0})\right)u,u\rangle\geq c\|u\|^{2}.
\]
It remains to prove that $\dom(\partial_{0})$ is a core for $\partial_{0}M_{0}(m_{0}).$
This however follows from 
\[
(1+\varepsilon\partial_{0})^{-1}M_{0}(m_{0})=M_{0}(m_{0})(1+\varepsilon\partial_{0})^{-1}-\varepsilon(1+\varepsilon\partial_{0})^{-1}M_{0}'(m_{0})(1+\varepsilon\partial_{0})^{-1},
\]
which gives 
\[
(1+\varepsilon\partial_{0})^{-1}\partial_{0}M_{0}(m_{0})=\partial_{0}M_{0}(m_{0})(1+\varepsilon\partial_{0})^{-1}-\varepsilon\partial_{0}(1+\varepsilon\partial_{0})^{-1}M_{0}'(m_{0})(1+\varepsilon\partial_{0})^{-1}.
\]
If now $u\in\dom(\partial_{0}M_{0}(m_{0}))$ we set $u_{\varepsilon}\coloneqq(1+\varepsilon\partial_{0})^{-1}u\in\dom(\partial_{0})$
and since $(1+\varepsilon\partial_{0})^{-1}\to1$ strongly, the latter
equality proves that $u_{\varepsilon}\to u$ with respect to the graph
norm of $\partial_{0}M_{0}(m_{0}).$ 
\end{proof}
We obtain \cite[Theoerem 16.3.1]{EvoEq} or the main result of \cite{Picard2013}
as a special case:
\begin{thm}
Assume \prettyref{eq:pos_def_Lipschitz}. Then for each $\rho\geq\rho_{0}$
the operator 
\[
\overline{\partial_{0}M_{0}(m_{0})+M_{1}(m_{0})+A}-c
\]
is m-accretive and hence, $\overline{\partial_{0}M_{0}(m_{0})+M_{1}(m_{0})+A}$
is boundedly invertible in $L_{2,\rho}(\R;H)$ yielding the well-posedness
of \prettyref{eq:prob}.
\end{thm}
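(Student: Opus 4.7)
My plan is to apply Theorem 1.1 to $T \coloneqq \overline{\partial_0 M_0(m_0) + M_1(m_0) + A} - c$: once $T$ and $T^*$ are both shown to be accretive, the theorem yields m-accretivity of $T$, and its concluding clause (applied with the shift $c > 0$) then gives bounded invertibility of $T + c = \overline{\partial_0 M_0(m_0) + M_1(m_0) + A}$, which is the well-posedness of \prettyref{eq:prob}. Accretivity of $T$ itself is immediate: on the dense subspace $\dom(\partial_0) \cap L_{2,\rho}(\R;\dom(A))$, \prettyref{lem:accretivity_non_auto_Lipschitz} handles the temporal part, the canonical extension of $A$ is accretive, and accretivity passes to the closure.

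The substantive step is the computation of $T^*$. I would set $S \coloneqq \partial_0 M_0(m_0) + M_1(m_0)$ and apply \prettyref{thm:adjoint_sum_pre} with $L_\varepsilon = R_\varepsilon \coloneqq (1+\varepsilon\partial_0)^{-1}$ and second summand the extended $A$. Since $A$ acts pointwise in $t$, it commutes with $(1+\varepsilon\partial_0)^{-1}$, handling the $A$-slot of \prettyref{eq:trans} with $\tilde K_\varepsilon = 0$. For the $S$-slot, the Lipschitz identity derived in the proof of \prettyref{lem:accretivity_non_auto_Lipschitz},
\[
(1+\varepsilon\partial_0)^{-1}\partial_0 M_0(m_0) = \partial_0 M_0(m_0)(1+\varepsilon\partial_0)^{-1} - \varepsilon\partial_0(1+\varepsilon\partial_0)^{-1}M_0'(m_0)(1+\varepsilon\partial_0)^{-1},
\]
combined with the elementary observation that $[(1+\varepsilon\partial_0)^{-1}, M_1(m_0)]$ tends to $0$ in the strong operator topology (using only strong convergence of $(1+\varepsilon\partial_0)^{-1}$ to $1$ and boundedness of $M_1(m_0)$), supplies a bounded $K_\varepsilon$ with $K_\varepsilon \rightharpoonup 0$. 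Condition \prettyref{eq:reg} follows via \prettyref{rem:reg_for_free} since $SR_\varepsilon$ is bounded ($\partial_0(1+\varepsilon\partial_0)^{-1}$ is bounded) and $\dom(\partial_0) \cap L_{2,\rho}(\R;\dom(A))$ is a core for the extended $A$. Thus $T^* = \overline{S^* + A^*} - c$.

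To close the argument I would verify accretivity of $T^*$ by establishing accretivity of $S^*$ with constant $c$, the remainder being inherited from the accretivity of $A^*$ by sum and closure. Using $\partial_0^* = -\partial_0 + 2\rho$, the self-adjointness of $M_0(m_0)$, and the same Lipschitz commutator as above, a direct computation gives $(\partial_0 M_0(m_0))^* = -\partial_0 M_0(m_0) + M_0'(m_0) + 2\rho M_0(m_0)$ on $\dom(\partial_0)$, and the symmetrisation computation of \prettyref{lem:accretivity_non_auto_Lipschitz} reappears to yield $\langle S^* u, u\rangle = \langle (\rho M_0 + \tfrac12 M_0' + M_1)(m_0) u, u\rangle \geq c\|u\|^2$ by \prettyref{eq:pos_def_Lipschitz}. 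Theorem 1.1 then delivers m-accretivity of $T$, completing the proof. The main technical obstacle I anticipate is the bookkeeping in verifying \prettyref{eq:trans}, in particular the weak vanishing of the bounded commutator with $M_1(m_0)$ and the reconciliation of the pointwise-on-$\dom(\partial_0)$ expression for $S^*$ with the closure-based adjoint produced by \prettyref{thm:adjoint_sum_pre}.
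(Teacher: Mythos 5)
Your proposal is correct and follows essentially the same route as the paper: accretivity of the sum from \prettyref{lem:accretivity_non_auto_Lipschitz} plus accretivity of $A$, then \prettyref{thm:adjoint_sum_pre} with $L_\varepsilon=R_\varepsilon=(1+\varepsilon\partial_0)^{-1}$, $\tilde K_\varepsilon=0$, a $K_\varepsilon$ built from the Lipschitz commutator identity, \prettyref{rem:reg_for_free} for \prettyref{eq:reg}, and finally accretivity of $S^*-c$ via $\partial_0^*=-\partial_0+2\rho$ and \prettyref{eq:pos_def_Lipschitz}. Your explicit inclusion of the commutator $[(1+\varepsilon\partial_0)^{-1},M_1(m_0)]$ in $K_\varepsilon$ is a small point of extra care relative to the paper's stated $K_\varepsilon$, but it does not change the argument.
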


\begin{proof}
It is clear that $\partial_{0}M_{0}(m_{0})+M_{1}(m_{0})+A-c$ is accretive
as it is the sum of two accretive operators. In order to show that
its closure is m-accretive, it suffices to show that its adjoint is
also accretive. For this, we compute its adjoint with the help of
\prettyref{thm:adjoint_sum_pre}. We set $S\coloneqq\partial_{0}M_{0}(m_{0})+M_{1}(m_{0})$
and $T\coloneqq A$. Then $C_{c}^{\infty}(\R;\dom(A))\subseteq\dom(S)\cap\dom(T)$
is dense in $L_{2,\rho}(\R;H)$ and it is even a core for $T.$ Setting
$L_{\varepsilon}\coloneqq(1+\varepsilon\partial_{0})^{-1}$, we obtain
\prettyref{eq:trans} with $R_{\varepsilon}=L_{\varepsilon},$ $\tilde{K}_{\varepsilon}=0$
and 
\[
K_{\varepsilon}=\varepsilon\partial_{0}(1+\varepsilon\partial_{0})^{-1}M_{0}'(m_{0})(1+\varepsilon\partial_{0})^{-1}.
\]
Finally, 
\[
SR_{\varepsilon}=\partial_{0}M_{0}(m_{0})\left(1+\varepsilon\partial_{0}\right)^{-1}=(1+\varepsilon\partial_{0})^{-1}\partial_{0}M_{0}(m_{0})+K_{\varepsilon}
\]
is bounded, and hence, \prettyref{eq:reg} holds by \prettyref{rem:reg_for_free}.
Thus, we can apply \prettyref{thm:adjoint_sum_pre} and obtain 
\[
\left(\partial_{0}M_{0}(m_{0})+M_{1}(m_{0})+A\right)^{\ast}=\overline{\left(\partial_{0}M_{0}(m_{0})+M_{1}(m_{0})\right)^{\ast}+A^{\ast}}.
\]
Since clearly $A^{\ast}$ is accretive, it remains to prove the strict
accretivity of $\left(\partial_{0}M_{0}(m_{0})+M_{1}(m_{0})\right)^{\ast}=\left(\partial_{0}M_{0}(m_{0})\right)^{\ast}+M_{1}(m_{0})^{\ast}$.
In order to compute the first adjoint we recall that $\dom(\partial_{0})$
is a core for $\partial_{0}M_{0}(m_{0})$ and hence 
\[
\left(\partial_{0}M_{0}(m_{0})\right)^{\ast}=\left(M_{0}(m_{0})\partial_{0}+M_{0}'(m_{0})\right)^{\ast}=\partial_{0}^{\ast}M_{0}(m_{0})+M_{0}'(m_{0}).
\]
Now, as in \prettyref{lem:accretivity_non_auto_Lipschitz} one proves
that \prettyref{eq:pos_def_Lipschitz} yields the accretivity of $\partial_{0}^{\ast}M_{0}(m_{0})+M_{0}'(m_{0})+M_{1}(m_{0})-c.$ 
\end{proof}

\subsection*{Commutator with $A$}

Here, we assume a commutator condition with $A$. To keep things simple,
we assume that there exists $d>0$ such that $M_{0}(t)\geq d$ for
almost every $t\in\R$ and that 
\begin{equation}
M_{0}(m_{0})A\subseteq AM_{0}(m_{0}).\label{eq:amcom}
\end{equation}

\begin{lem}
\label{lem:commusequality}Under the standing assumptions together
with \prettyref{eq:amcom}, we have
\[
M_{0}(m_{0})A=AM_{0}(m_{0}).
\]
\end{lem}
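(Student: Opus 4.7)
The plan is to exploit that the pointwise lower bound $M_0(t)\geq d$ together with the self-adjointness of $M_{0}(m_{0})$ makes $M_{0}(m_{0})$ a topological isomorphism on $L_{2,\rho}(\R;H)$, so that one can legitimately work with its bounded inverse $M_{0}(m_{0})^{-1}$ and transfer the commutator assumption from $M_{0}(m_{0})$ to $M_{0}(m_{0})^{-1}$.

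First, I would recast the commutator assumption \prettyref{eq:amcom} at the resolvent level. Adding $M_{0}(m_{0})u$ to both sides of $M_{0}(m_{0})Au=AM_{0}(m_{0})u$ for $u\in\dom(A)$ yields $M_{0}(m_{0})(1+\lambda A)u=(1+\lambda A)M_{0}(m_{0})u$ for every $\lambda>0$. Since $A$ is m-accretive, $(1+\lambda A)\colon\dom(A)\to H$ is bijective, so substituting $u=(1+\lambda A)^{-1}v$ gives the everywhere-defined identity
\[
M_{0}(m_{0})(1+\lambda A)^{-1}=(1+\lambda A)^{-1}M_{0}(m_{0})\qquad(\lambda>0).
\]

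The key step is to multiply this identity by $M_{0}(m_{0})^{-1}$ from both sides to conclude that $M_{0}(m_{0})^{-1}$ also commutes with the resolvent $(1+\lambda A)^{-1}$. Once this is established, I would reverse the resolvent-to-operator passage carried out above for $M_{0}(m_{0})^{-1}$: for $u\in\dom(A)$, write $u=(1+\lambda A)^{-1}v$, apply the newly obtained commutator at $v$, and rearrange to obtain $M_{0}(m_{0})^{-1}u\in\dom(A)$ with $AM_{0}(m_{0})^{-1}u=M_{0}(m_{0})^{-1}Au$. This yields the dual inclusion
\[
M_{0}(m_{0})^{-1}A\subseteq AM_{0}(m_{0})^{-1}.
\]

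Finally, to deduce $AM_{0}(m_{0})\subseteq M_{0}(m_{0})A$, I would take an arbitrary $w\in\dom(AM_{0}(m_{0}))$, i.e.\ $M_{0}(m_{0})w\in\dom(A)$, and apply the dual inclusion at $M_{0}(m_{0})w$ to conclude $w=M_{0}(m_{0})^{-1}M_{0}(m_{0})w\in\dom(A)$ with $Aw=M_{0}(m_{0})^{-1}AM_{0}(m_{0})w$; multiplying by $M_{0}(m_{0})$ gives $M_{0}(m_{0})Aw=AM_{0}(m_{0})w$, which is exactly the reverse inclusion. Combined with \prettyref{eq:amcom} this yields the claimed equality. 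The step requiring the most care is the passage between the resolvent-level identity and the operator-level inclusion for $M_{0}(m_{0})^{-1}$; everything else is bookkeeping with bounded operators and domains.
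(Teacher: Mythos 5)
Your proposal is correct, but it takes a more hands-on route than the paper. The paper's proof is a two-line abstract argument: from \prettyref{eq:amcom} one gets $M_{0}(m_{0})(A+1)\subseteq(A+1)M_{0}(m_{0})$, and since the left-hand side is onto (both factors are surjective) while the right-hand side is one-to-one (both factors are injective), the inclusion is automatically an equality --- this is the general fact that an inclusion of a surjective operator into an injective one forces equality. You instead pass to the resolvent identity $M_{0}(m_{0})(1+\lambda A)^{-1}=(1+\lambda A)^{-1}M_{0}(m_{0})$, conjugate by $M_{0}(m_{0})^{-1}$ to obtain $M_{0}(m_{0})^{-1}A\subseteq AM_{0}(m_{0})^{-1}$, and then unwind this at $M_{0}(m_{0})w$ to get the reverse inclusion $AM_{0}(m_{0})\subseteq M_{0}(m_{0})A$. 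Both arguments use exactly the same two structural facts --- bounded invertibility of $M_{0}(m_{0})$ (from $M_{0}(t)\geq d$ and self-adjointness) and bijectivity of $1+A$ (from m-accretivity) --- so the content is the same; your version is longer but makes the mechanism explicit and, as a by-product, records the resolvent commutation and the inclusion for $M_{0}(m_{0})^{-1}$, which are used elsewhere in this section anyway (compare the remark on square roots following the lemma). One cosmetic point: to get $M_{0}(m_{0})(1+\lambda A)u=(1+\lambda A)M_{0}(m_{0})u$ you should first scale the commutator identity by $\lambda$ and then add $M_{0}(m_{0})u$; as written, ``adding $M_{0}(m_{0})u$'' only gives the case $\lambda=1$, which is in fact all you need.
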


\begin{proof}
The inclusion $M_{0}(m_{0})A\subseteq AM_{0}(m_{0})$ leads to 
\[
M_{0}(m_{0})(A+1)\subseteq(A+1)M_{0}(m_{0}).
\]
Now, the right-hand side operator is one-to-one and the left-hand
side is onto. Hence, 
\[
M_{0}(m_{0})(A+1)=(A+1)M_{0}(m_{0}),
\]
which yields the assertion%
\begin{comment}
Let $x\in\dom(AM_{0}(m_{0})).$ Then $x_{\varepsilon}\coloneqq(1+\varepsilon A)^{-1}x\in\dom(A)$
and thus
\[
M_{0}(m_{0})Ax_{\varepsilon}=AM_{0}(m_{0})(1+\varepsilon A)^{-1}x=A(1+\varepsilon A)^{-1}M_{0}(m_{0})x=(1+\varepsilon A)^{-1}AM_{0}(m_{0})x\to AM_{0}(m_{0})x
\]
as $\varepsilon\to0+$. Since $M_{0}(m_{0})$ is continuously invertible,
$M_{0}(m_{0})A$ is closed by Lemma \ref{lem:BTclosed} and therefore
$x\in\dom(M_{0}(m_{0})A)$. Hence, $AM_{0}(m_{0})=M_{0}(m_{0})A$. 
\end{comment}
\end{proof}
\begin{rem}
It is a consequence of the definition of the square root (see also
\cite[Theoerem B.8.2 and its proof]{Primer}) that 
\[
M_{0}(m_{0})A\subseteq AM_{0}(m_{0})
\]
 leads to 
\[
M_{0}(m_{0})^{1/2}A\subseteq AM_{0}(m_{0})^{1/2};
\]
thus, by \prettyref{lem:commusequality}, it follows
\[
M_{0}(m_{0})^{1/2}A=AM_{0}(m_{0})^{1/2}.
\]
In particular, we obtain
\[
M_{0}(m_{0})^{-1/2}A=AM_{0}(m_{0})^{-1/2}.
\]
\end{rem}

For motivating the main result of this section, we consider the following
evolutionary equation
\[
\left(\partial_{0}M_{0}(m_{0})+M_{1}(m_{0})+A\right)U=F.
\]
After multiplication by $M_{0}(m_{0})^{1/2}$ the latter can be rewritten
\begin{align*}
 & \left(M_{0}(m_{0})^{1/2}\partial_{0}M_{0}(m_{0})^{1/2}+M_{0}(m_{0})^{1/2}M_{1}(m_{0})M_{0}(m_{0})^{-1/2}+M_{0}(m_{0})^{1/2}AM_{0}(m_{0})^{-1/2}\right)M_{0}(m_{0})^{1/2}U\\
 & =M_{0}(m_{0})^{1/2}F.
\end{align*}
Using the latter remark and \prettyref{eq:amcom}, we get
\[
\left(M_{0}(m_{0})^{1/2}\partial_{0}M_{0}(m_{0})^{1/2}+M_{0}(m_{0})^{1/2}M_{1}(m_{0})M_{0}(m_{0})^{-1/2}+A\right)M_{0}(m_{0})^{1/2}U=M_{0}(m_{0})^{1/2}F.
\]

\begin{rem}
\label{rem:m0before}If instead of the above equation, we consider
\[
\left(M_{0}(m_{0})\partial_{0}+M_{1}(m_{0})+A\right)U=F,
\]
we may multiply by $M_{0}(m_{0})^{-1/2}$ instead and obtain
\begin{align*}
 & \left(M_{0}(m_{0})^{1/2}\partial_{0}M_{0}(m_{0})^{1/2}+M_{0}(m_{0})^{-1/2}M_{1}(m_{0})M_{0}(m_{0})^{1/2}+M_{0}(m_{0})^{-1/2}AM_{0}(m_{0})^{1/2}\right)M_{0}(m_{0})^{-1/2}U\\
 & =M_{0}(m_{0})^{-1/2}F,
\end{align*}
which in turn leads to
\[
\left(M_{0}(m_{0})^{1/2}\partial_{0}M_{0}(m_{0})^{1/2}+M_{0}(m_{0})^{-1/2}M_{1}(m_{0})M_{0}(m_{0})^{1/2}+A\right)M_{0}(m_{0})^{-1/2}U=M_{0}(m_{0})^{-1/2}F,
\]
being basically of the same shape of equation as the one above with
$\partial_{0}M_{0}(m_{0})$.
\end{rem}

The main result of this section is the following.
\begin{thm}
\label{thm:adjointevoeq}Under the standing assumptions,
\begin{align*}
 & \left(M_{0}(m_{0})^{1/2}\partial_{0}M_{0}(m_{0})^{1/2}+M_{0}(m_{0})^{1/2}M_{1}(m_{0})M_{0}(m_{0})^{-1/2}+A\right)^{*}\\
 & =\overline{\left(M_{0}(m_{0})^{1/2}\partial_{0}^{*}M_{0}(m_{0})^{1/2}+M_{0}(m_{0})^{-1/2}M_{1}(m_{0})^{*}M_{0}(m_{0})^{-1/2}+A^{*}\right)}.
\end{align*}
\end{thm}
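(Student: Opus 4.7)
The plan is to apply Theorem \ref{thm:adjointofsum} after splitting off the bounded term. Write
\[
S_{0} := M_{0}(m_{0})^{1/2}\partial_{0}M_{0}(m_{0})^{1/2},\quad B := M_{0}(m_{0})^{1/2}M_{1}(m_{0})M_{0}(m_{0})^{-1/2},\quad T := A,
\]
so the operator in question is $S_{0}+B+T$. Since $B$ is bounded and everywhere defined, $(S_{0}+B+T)^{*}=(S_{0}+T)^{*}+B^{*}$ and, once $(S_{0}+T)^{*}=\overline{S_{0}^{*}+T^{*}}$ is established, also $\overline{S_{0}^{*}+T^{*}+B^{*}}=\overline{S_{0}^{*}+T^{*}}+B^{*}$. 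Hence the claim reduces to the adjoint identity for $S_{0}+T$, together with the routine identifications $T^{*}=A^{*}$ and $B^{*}=M_{0}(m_{0})^{-1/2}M_{1}(m_{0})^{*}M_{0}(m_{0})^{1/2}$.

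Theorem \ref{thm:adjointofsum} requires m-accretivity of both $S_{0}$ and $T$ together with commuting resolvents. m-accretivity of $T=A$ is part of the standing hypotheses. For $S_{0}$, observe that $M_{0}(m_{0})^{1/2}$ is bounded, self-adjoint, and onto (because $M_{0}\geq d>0$), so Theorem \ref{thm:adjointB*TB} applies to the bounded factor $M_{0}(m_{0})^{1/2}$ and the closed densely defined $\partial_{0}$, showing that $S_{0}$ is closed with $S_{0}^{*}=M_{0}(m_{0})^{1/2}\partial_{0}^{*}M_{0}(m_{0})^{1/2}$. A direct computation using $\langle\partial_{0}u,u\rangle=\rho\|u\|^{2}$ on $\dom(\partial_{0})$ then yields $\langle S_{0}u,u\rangle\geq\rho d\|u\|^{2}$, and the analogous one for $S_{0}^{*}$ gives accretivity of $S_{0}^{*}$; the criterion recalled in the introduction (accretivity of both $S_{0}$ and its adjoint) then yields m-accretivity of $S_{0}$.

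For the resolvent commutation, the extended operator $A$ on $L_{2,\rho}(\R;H)$ acts pointwise in $t$, so its resolvent $(1+\varepsilon A)^{-1}$ commutes with $\partial_{0}$ on $\dom(\partial_{0})$ and, by the remark preceding the theorem, also with $M_{0}(m_{0})^{\pm1/2}$. Composing these commutations yields $(1+\varepsilon A)^{-1}S_{0}\subseteq S_{0}(1+\varepsilon A)^{-1}$, which upgrades in the standard way to $(1+S_{0})^{-1}(1+A)^{-1}=(1+A)^{-1}(1+S_{0})^{-1}$. Theorem \ref{thm:adjointofsum} now produces $(S_{0}+T)^{*}=\overline{S_{0}^{*}+T^{*}}$, and reinserting $B$ as above delivers the stated identity. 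The principal technical delicacy is the commutation of $(1+\varepsilon A)^{-1}$ with $\partial_{0}$, which hinges on $A$ being time-pointwise so that its resolvent is constant in $t$ and commutes with differentiation via an approximation by $C_{c}^{\infty}(\R;\dom(A))$; the remaining steps are bookkeeping using the results of Sections 2 and 3.
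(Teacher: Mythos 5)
Your proof is correct and follows essentially the same route as the paper: split off the bounded similarity-transformed $M_{1}$-term, identify $\bigl(M_{0}(m_{0})^{1/2}\partial_{0}M_{0}(m_{0})^{1/2}\bigr)^{*}=M_{0}(m_{0})^{1/2}\partial_{0}^{*}M_{0}(m_{0})^{1/2}$ via \prettyref{thm:adjointB*TB}, note m-accretivity of both summands, and verify the resolvent commutation before invoking \prettyref{thm:adjointofsum} (the paper checks the commutation by an explicit resolvent factorization $\left(1+M_{0}(m_{0})^{1/2}\partial_{0}M_{0}(m_{0})^{1/2}\right)^{-1}=M_{0}(m_{0})^{-1/2}(M_{0}(m_{0})^{-1}+\partial_{0})^{-1}M_{0}(m_{0})^{-1/2}$, whereas you commute $(1+\varepsilon A)^{-1}$ through $S_{0}$ and upgrade; both rest on the same two ingredients, namely \prettyref{lem:commusequality} and the time-independence of $A$'s resolvent). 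One remark: your $B^{*}=M_{0}(m_{0})^{-1/2}M_{1}(m_{0})^{*}M_{0}(m_{0})^{1/2}$ is the correct adjoint of the bounded term, so the exponent $-1/2$ on the rightmost factor in the printed statement of \prettyref{thm:adjointevoeq} appears to be a typo, as corroborated by the form of the adjoint used in the proof of \prettyref{lem:crho0}.
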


\begin{proof}
First of all note that $M_{0}(m_{0})^{1/2}M_{1}(m_{0})M_{0}(m_{0})^{-1/2}$
is a bounded linear operator and can, thus, be assumed to be $0$
when computing the adjoint. Next, 
\[
\left(M_{0}(m_{0})^{1/2}\partial_{0}M_{0}(m_{0})^{1/2}\right)^{*}=M_{0}(m_{0})^{1/2}\partial_{0}^{*}M_{0}(m_{0})^{1/2}
\]
by \prettyref{thm:adjointB*TB} applied to $T=\partial_{0}$ and $B=M_{0}(m_{0})^{1/2}=B^{*}$.
In particular, note that it particularly follows that 
\[
\left(M_{0}(m_{0})^{1/2}\partial_{0}M_{0}(m_{0})^{1/2}\right)
\]
is m-accretive. Thus, for proving the present theorem, it suffices
to apply \prettyref{thm:adjointofsum} to $T=M_{0}(m_{0})^{1/2}\partial_{0}M_{0}(m_{0})^{1/2}$
and $S=A$. What remains is to show the commutativity of the resolvents:
\begin{align*}
\left(1+T\right)^{-1}\left(1+S\right)^{-1} & =(1+M_{0}(m_{0})^{1/2}\partial_{0}M_{0}(m_{0})^{1/2})^{-1}(1+A)^{-1}\\
 & =(M_{0}(m_{0})^{1/2}(M_{0}(m_{0})^{-1}+\partial_{0})M_{0}(m_{0})^{1/2})^{-1}(1+A)^{-1}\\
 & =M_{0}(m_{0})^{-1/2}(M_{0}(m_{0})^{-1}+\partial_{0})^{-1}M_{0}(m_{0})^{-1/2}(1+A)^{-1}\\
 & =M_{0}(m_{0})^{-1/2}(M_{0}(m_{0})^{-1}+\partial_{0})^{-1}(1+A)^{-1}M_{0}(m_{0})^{-1/2}\\
 & =M_{0}(m_{0})^{-1/2}\left((1+A)(M_{0}(m_{0})^{-1}+\partial_{0})\right)^{-1}M_{0}(m_{0})^{-1/2}\\
 & =M_{0}(m_{0})^{-1/2}\left((M_{0}(m_{0})^{-1}+\partial_{0})(1+A)\right)^{-1}M_{0}(m_{0})^{-1/2}\\
 & =\left(1+S\right)^{-1}\left(1+T\right)^{-1},
\end{align*}
where we used $\left(1+A\right)\partial_{0}=\partial_{0}(1+A)$ and
\prettyref{lem:commusequality} for $M_{0}(m_{0})^{-1}(1+A)=(1+A)M_{0}(m_{0})^{-1}$.
Hence, \prettyref{thm:adjointofsum} is applicable and the assertion
follows.
\end{proof}
\begin{lem}
\label{lem:crho0}For each $c>0$ there exists $\rho_{0}>0$ such
that for each $\rho\geq\rho_{0}$ the operator 
\[
M_{0}(m_{0})^{1/2}\partial_{0}M_{0}(m_{0})^{1/2}+M_{0}(m_{0})^{1/2}M_{1}(m_{0})M_{0}(m_{0})^{-1/2}-c
\]
is m-accretive.
\end{lem}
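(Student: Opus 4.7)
The plan is to verify the criterion recalled at the start of the introduction: a closed, densely defined operator on a Hilbert space is m-accretive if and only if both it and its adjoint are accretive. Set
\[
T := M_0(m_0)^{1/2}\partial_0 M_0(m_0)^{1/2},\qquad B := M_0(m_0)^{1/2}M_1(m_0)M_0(m_0)^{-1/2},
\]
so that the task is to show $T+B-c$ is closed, densely defined, accretive, and has accretive adjoint for $\rho$ sufficiently large.

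First I would set up the functional calculus of $M_0(m_0)$. By Hypothesis~3, $M_0(m_0)$ is bounded and self-adjoint, and the pointwise bound $M_0(t)\geq d$ a.e.\ forces $M_0(m_0)\geq d$; hence its positive spectral square root $M_0(m_0)^{1/2}$ is a self-adjoint topological isomorphism on $L_{2,\rho}(\R;H)$ with $\|M_0(m_0)^{-1/2}\|\leq 1/\sqrt{d}$. In particular, $B$ is bounded. Applying \prettyref{thm:adjointB*TB} to $\partial_0$ and $M_0(m_0)^{1/2}$ (which is surjective), $T$ is closed and densely defined with
\[
T^* = M_0(m_0)^{1/2}\partial_0^* M_0(m_0)^{1/2}.
\]
Adding the bounded shift $B-c$ preserves closedness and dense definedness, and $(T+B-c)^* = T^*+B^*-c$.

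The accretivity step is a direct change of variable. For $u\in\dom(T)$, put $v := M_0(m_0)^{1/2}u\in\dom(\partial_0)$; then, using $\Re\partial_0=\rho$,
\[
\langle u,Tu\rangle = \langle v,\partial_0 v\rangle \geq \rho\|v\|^2 \geq \rho d\|u\|^2.
\]
Combined with $\langle u,(B-c)u\rangle\geq -(\|B\|+c)\|u\|^2$, this yields
\[
\langle u,(T+B-c)u\rangle \geq (\rho d-\|B\|-c)\|u\|^2.
\]
Choosing $\rho_0 := (\|B\|+c)/d$, the operator $T+B-c$ is accretive for every $\rho\geq\rho_0$. Since $\partial_0^* = -\partial_0+2\rho$ also satisfies $\Re\partial_0^* = \rho$, the same computation on $T^*+B^*-c$ shows the adjoint is accretive above the same threshold. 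The m-accretivity criterion then delivers the claim.

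The only point that needs care is the closedness of $T$ together with the clean formula for $T^*$; both are handed to us by \prettyref{thm:adjointB*TB}. Everything else is a standard bounded-perturbation argument made cheap by the fact that the real part of $T$ grows like $\rho d$.
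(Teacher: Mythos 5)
Your proposal is correct and follows essentially the same route as the paper: estimate $\langle u,Tu\rangle=\rho\|M_0(m_0)^{1/2}u\|^2\geq\rho d\|u\|^2$, absorb the bounded term $M_0(m_0)^{1/2}M_1(m_0)M_0(m_0)^{-1/2}$, repeat the argument for the adjoint (whose form comes from \prettyref{thm:adjointB*TB}), and invoke the criterion that a closed densely defined operator is m-accretive iff it and its adjoint are accretive. Your additional remarks on closedness and the explicit threshold $\rho_0=(\|B\|+c)/d$ only make explicit what the paper leaves implicit.
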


\begin{proof}
For $u\in\dom(\partial_{0}M_{0}(m_{0})^{1/2})$ we compute 
\begin{align*}
 & \langle\left(M_{0}(m_{0})^{1/2}\partial_{0}M_{0}(m_{0})^{1/2}+M_{0}(m_{0})^{1/2}M_{1}(m_{0})M_{0}(m_{0})^{-1/2}\right)u,u\rangle\\
 & =\rho\|M_{0}(m_{0})^{1/2}u\|^{2}-\|M_{0}(m_{0})^{1/2}M_{1}(m_{0})M_{0}(m_{0})^{-1/2}\|\|u\|^{2}\\
 & \geq\left(\rho d-\|M_{0}(m_{0})^{1/2}M_{1}(m_{0})M_{0}(m_{0})^{-1/2}\|\right)\|u\|^{2}.
\end{align*}
Choosing now $\rho$ large enough, we infer the strict accretivity
of the operator. Since its adjoint is of the form 
\[
M_{0}(m_{0})^{1/2}\partial_{0}^{\ast}M_{0}(m_{0})^{1/2}+M_{0}(m_{0})^{-1/2}M_{1}(m_{0})^{\ast}M_{0}(m_{0})^{1/2}
\]
the same argument shows that for $\rho$ large enough, this operator
is also accretive, and hence, the assertion follows. 
\end{proof}
\begin{cor}
Under the standing assumptions, the operator
\[
\overline{\partial_{0}M_{0}(m_{0})+M_{1}(m_{0})+A}
\]
is boundedly invertible in $L_{2,\rho}(\R;H)$ for large enough $\rho.$ 
\end{cor}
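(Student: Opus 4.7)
Set $B \coloneqq M_0(m_0)^{1/2}$; by the lower bound $M_0(t) \geq d > 0$ together with $M_0(m_0)^* = M_0(m_0)$, $B$ is a self-adjoint bounded operator on $L_{2,\rho}(\R;H)$ with bounded inverse $B^{-1} = M_0(m_0)^{-1/2}$, hence a topological isomorphism. Let
\[
C \coloneqq M_0(m_0)^{1/2}\partial_0 M_0(m_0)^{1/2} + M_0(m_0)^{1/2}M_1(m_0) M_0(m_0)^{-1/2} + A
\]
be the symmetrised operator whose adjoint was computed in \prettyref{thm:adjointevoeq}. The plan is to show that for large enough $\rho$ the operator $\overline{C}$ is boundedly invertible and then to transfer this property to $\overline{\partial_{0}M_{0}(m_{0})+M_{1}(m_{0})+A}$ via the similarity induced by $B$.

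To show bounded invertibility of $\overline{C}$, I would apply the theorem cited at the outset (\cite[Chapter 3, Theorem 1.43]{Hu}) whose concluding statement asserts that $T - c$ m-accretive for some $c > 0$ implies $0 \in \rho(T)$. Fix $c > 0$ and choose $\rho \geq \rho_0$ as in \prettyref{lem:crho0}; the lemma gives m-accretivity, hence accretivity, of the $\partial_0$-part of $C$ shifted by $-c$, and adding the accretive $A$ yields accretivity of $C - c$. For the adjoint, \prettyref{thm:adjointevoeq} provides
\[
C^* = \overline{M_0(m_0)^{1/2}\partial_0^{*}M_0(m_0)^{1/2} + M_0(m_0)^{-1/2}M_1(m_0)^{*}M_0(m_0)^{-1/2} + A^{*}},
\]
and since $\Re \partial_0^* = \rho$ holds just as $\Re \partial_0 = \rho$ does, the same computation as in the proof of \prettyref{lem:crho0}, combined with accretivity of $A^*$, shows that $C^* - c$ is accretive. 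The cited theorem then produces m-accretivity of $\overline{C} - c$, and its final statement yields $0 \in \rho(\overline{C})$.

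The second step is to verify the similarity identity
\[
\overline{\partial_{0}M_{0}(m_{0})+M_{1}(m_{0})+A} = B^{-1}\,\overline{C}\,B.
\]
On the level of unclosed operators, $u \in \dom(\partial_{0}M_{0}(m_{0}))$ is equivalent to $M_0(m_0)u = M_0(m_0)^{1/2}(Bu) \in \dom(\partial_0)$, which is precisely the condition $Bu \in \dom(M_{0}(m_{0})^{1/2}\partial_{0}M_{0}(m_{0})^{1/2})$; similarly, $u \in \dom(A)$ is equivalent to $Bu \in \dom(A)$ thanks to $M_{0}(m_{0})^{1/2}A = AM_{0}(m_{0})^{1/2}$ from the remark after \prettyref{lem:commusequality}. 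Using this commutation once more, one computes $CBu = B(\partial_{0}M_{0}(m_{0})+M_{1}(m_{0})+A)u$ on the common domain, i.e.\ $\partial_{0}M_{0}(m_{0})+M_{1}(m_{0})+A = B^{-1}CB$. Because $B$ is a topological isomorphism, closure commutes with this conjugation, and the identity passes to the closure. Bounded invertibility of $\overline{C}$ together with boundedness of $B$ and $B^{-1}$ then implies bounded invertibility of the right-hand side, as required.

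The main subtlety lies in the domain identification of the similarity step: for merely measurable $M_0$ the domains of $\partial_{0}M_{0}(m_{0})$ and of $M_{0}(m_{0})^{1/2}\partial_{0}M_{0}(m_{0})^{1/2}$ are genuinely different, but precomposition with $B = M_{0}(m_{0})^{1/2}$ exactly reconciles them via the factorisation $M_{0}(m_{0}) = M_{0}(m_{0})^{1/2}\,B$. Everything else is a matter of unwinding the previously established results.
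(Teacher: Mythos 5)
Your proposal is correct and follows essentially the same route as the paper: establish m-accretivity of the shifted symmetrised operator $C-c$ by combining \prettyref{lem:crho0} with the adjoint formula of \prettyref{thm:adjointevoeq}, conclude $0\in\rho(\overline{C})$, and transfer back via conjugation with $M_{0}(m_{0})^{\pm1/2}$. Your treatment of the similarity step (domain identification and commutation of closure with the isomorphism $B$) is more explicit than the paper's, which merely points to the reformulation before \prettyref{rem:m0before}, but the argument is the same.
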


\begin{proof}
In the present situation, consider 
\[
\tilde{T}\coloneqq\overline{T+S},
\]
where $T\coloneqq M_{0}(m_{0})^{1/2}\partial_{0}M_{0}(m_{0})^{1/2}+M_{0}(m_{0})^{1/2}M_{1}(m_{0})M_{0}(m_{0})^{-1/2}$
and $S\coloneqq A$. We will show that $\tilde{T}-c$ is m-accretive
for some $c>0$. By assumption and \prettyref{lem:crho0}, it is not
difficult to see that $\tilde{T}-c$ is accretive for some $c>0$
and all large enough $\rho>0$. Using the formula for the adjoint
in \prettyref{thm:adjointevoeq} and taking into account the accretivtiy
of $T^{*}-c$, we have that $\left(\tilde{T}\right)^{*}-c$ is, too,
accretive. Hence, $0\in\rho(\tilde{T})$. The reformulation just before
 \prettyref{rem:m0before} yields the assertion by multiplying $\tilde{T}$
by the topological isomorphism $M_{0}(m_{0})^{-1/2}$ from the left
and $M_{0}(m_{0})^{1/2}$ from the right.
\end{proof}
\begin{rem}
A similar result holds under the assumption that 
\[
M_{0}(m_{0})^{1/2}\partial_{0}M_{0}(m_{0})^{1/2}+M_{0}(m_{0})^{-1/2}M_{1}(m_{0})M_{0}(m_{0})^{1/2}-c
\]
 is m-accretive for some $c>0$. Then 
\[
0\in\rho\left(\overline{M_{0}(m_{0})\partial_{0}+M_{1}(m_{0})+A}\right).
\]
\end{rem}

\begin{example}
Let $d\in\N$ and consider $H\coloneqq L_{2}(0,1)^{d}.$ Moreover,
let $B\in\R^{d\times d}$ with $\|B\|\leq1$ and consider the operator
$A$ given by 
\begin{align*}
\dom(A) & \coloneqq\{u\in H^{1}(0,1)^{d}\,;\,u(0)=Bu(1)\},\\
Au & \coloneqq u'.
\end{align*}
Then $A$ is m-accretive by \cite[Theorem 4.1]{Picard2023}. Moreover,
let $c_{1},\ldots,c_{d}\in L_{\infty}(\R)$ such that $c_{1},\ldots,c_{d}\geq k>0$
almost everywhere. We set $M_{0}(t)\coloneqq\diag(c_{j}(t))$ and
assume that $BM_{0}(t)=M_{0}(t)B$ for almost every $t\in\R.$ Then
we clearly have for $u\in L_{2,\rho}(\R;\dom(A))$ that $M_{0}(m_{0})u\in L_{2,\rho}(\R;H^{1}(0,1)^{d})$
and that 
\[
BM_{0}(t)u(1)=M_{0}(t)Bu(1)=M_{0}(t)u(0);
\]
that is, $M_{0}(m_{0})u\in L_{2,\rho}(\R;\dom(A)).$ Moreover, 
\[
AM_{0}(m_{0})u=M_{0}(m_{0})u'=M_{0}(m_{0})Au,
\]
which shows $M_{0}(m_{0})A\subseteq AM_{0}(m_{0}).$ Hence, by our
findings above, the non-autonomous problem 
\[
\left(\partial_{0}M_{0}(m_{0})+A\right)u=f
\]
is well-posed in $L_{2,\rho}(\R;L_{2}(0,1)^{d})$ if we choose $\rho$
large enough. 
\end{example}

\section{Conclusion}

We provided applicable conditions for m-accretive operators so that
the adjoint of the sum can be represented as the closure of the sum
of adjoints of the individual operators. We applied this observation
to evolutionary equations and developed well-posedness criteria for
the same. The case of the operator $M_{0}$ being $L_{\infty}$-in
time only and having non-trivial (possibly time-independent) kernel
remains open for easily applicable conditions establishing well-posedness.
Thus, to properly address the general situation of time-dependent
partial differential algebraic equations appears to be a challenge
for future research.

\bibliographystyle{abbrvurl}

\end{document}